\numberwithin{equation}{section}
\numberwithin{figure}{section}
\theoremstyle{plain}
\newtheorem{thm}{\protect\theoremname}
  \theoremstyle{plain}
  \newtheorem{conjecture}[thm]{\protect\conjecturename}
  \theoremstyle{definition}
  \newtheorem{defn}[thm]{\protect\definitionname}
  \theoremstyle{plain}
  \newtheorem{cor}[thm]{\protect\corollaryname}
  \theoremstyle{plain}
  \newtheorem{lem}[thm]{\protect\lemmaname}
  \theoremstyle{plain}
  \newtheorem{prop}[thm]{\protect\propositionname}
\newcommand{\xyR}[1]{
  \xydef@\xymatrixrowsep@{#1}}
\newcommand{\xyC}[1]{
  \xydef@\xymatrixcolsep@{#1}}
\let\myTOC\tableofcontents
\renewcommand\tableofcontents{%
  \pdfbookmark[1]{\contentsname}{}
  \myTOC }
\def\LyX{\texorpdfstring{%
  L\kern-.1667em\lower.25em\hbox{Y}\kern-.125emX\@}
  {LyX}}
  \providecommand{\conjecturename}{Conjecture}
  \providecommand{\corollaryname}{Corollary}
  \providecommand{\definitionname}{Definition}
  \providecommand{\lemmaname}{Lemma}
  \providecommand{\propositionname}{Proposition}
\providecommand{\theoremname}{Theorem}
\begin{document}

\title{Cross Number Invariants of Finite Abelian Groups}

\author{Xiaoyu He}

\email{xiaoyuhe@college.harvard.edu}

\address{Eliot House, Harvard College, Cambridge, MA 02138.}

\date{\today}
\begin{abstract}
The cross number of a sequence over a finite abelian group $G$ is
the sum of the inverse orders of the terms of that sequence. We study
two group invariants, the maximal cross number of a zero-sum free
sequence over $G$, called $\mathsf{k}(G)$, introduced by Krause,
and the maximal cross number of a unique factorization sequence over
$G$, called $K_{1}(G)$, introduced by Gao and Wang. Conjectured
formulae for $\mathsf{k}(G)$ and $\mathsf{K}_{1}(G)$ are known,
but only some special cases are proved for either. We show structural
results about maximal cross number sequences that allow us to prove
an inductive theorem giving conditions under which the conjectured
values of $\mathsf{k}$ and $\mathsf{K}_{1}$ must be correct for
$G\oplus C_{p^{\alpha}}$ if they are correct for a group $G$. As
a corollary of this result we prove the conjectured values of $\mathsf{k}(G)$
and $\mathsf{K}_{1}(G)$ for cyclic groups $C_{n}$, given that the
prime factors of $n$ are far apart. Our methods also prove the $\mathsf{K}_{1}(G)$
conjecture for rank two groups of the form $C_{n}\oplus C_{q}$, where
$q$ is the largest or second largest prime dividing $n$, and the
prime factors of $n$ are far apart, and the $\mathsf{k}(G)$ conjecture
for groups of the form $C_{n}\oplus H_{q}$, where the prime factors
of $n$ are far apart, $q$ is the largest prime factor of $n$, and
$H_{q}$ is an arbitrary finite abelian $q$-group. Finally, we pose
a conjecture about the structure of maximal-length unique factorization
sequences over elementary $p$-groups, which is a major roadblock
to extending the $\mathsf{K}_{1}$ conjecture to groups of higher
rank, and formulate a general question about the structure of maximal
zero-sum free and unique factorization sequences with respect to arbitrary
weighting functions.
\end{abstract}
\maketitle

\section{Introduction}

Let $(G,+)$ be a finite abelian group written additively, and let
$G^{\bullet}$ be the set of nonzero elements of $G$. For any subset
$G_{0}\subset G$, we define $\mathcal{G}(G_{0})$ to be the multiplicative
free abelian group generated by $G_{0}$. Similarly, we define $\mathcal{F}(G_{0})\subset\mathcal{G}(G_{0})$
to be the multiplicative free abelian monoid over $G_{0}$. A \emph{sequence
}over $G_{0}$ is an element of $\mathcal{F}(G_{0})$. Elements of
$\mathcal{G}(G_{0})$ are of the form 
\[
S=\prod_{g\in G_{0}}g^{v_{g}(S)},
\]
where $v_{g}:\mathcal{G}(G_{0})\rightarrow\mathbb{Z}$ is the \emph{valuation}
function for $g$, satisfying $v_{g}(S)=0$ for all but finitely many
$g$ given any fixed $S$. If $S\in\mathcal{F}(G_{0})$ then we have
further that $v_{g}(S)\geq0$ for all $g\in G_{0}$. The identity
$1$ of the monoid $\mathcal{F}(G_{0})$ is the unique sequence satisfying
$v_{g}(1)=0$ for all $g\in G_{0}$. Given two sequences $S,T\in\mathcal{F}(G_{0}),$
we say that $T$ is a \emph{subsequence} of $S$, or \emph{divides}
$S$, if $v_{g}(T)\leq v_{g}(S)$ for all $g\in G_{0}$. In such a
case we may also write $T\mid S$.

By the \emph{greatest common divisor} of two sequences $S$ and $T$
over $G_{0}$ we mean the sequence 
\[
\mbox{gcd}(S,T)=\prod_{g\in G_{0}}g^{\min(v_{g}(S),v_{g}(T))}.
\]

Let $\mathbb{N}=\mathbb{Z}_{\geq0}$. An \emph{indexed sequence} over
a set $G_{0}\subset G$ is a sequence over $G_{0}\times\mathbb{N}$,
i.e. an element of $\mathcal{F}(G_{0}\times\mathbb{N})$. To each
indexed sequence $S\in\mathcal{F}(G_{0}\times\mathbb{N})$ we associate
a unique sequence $\alpha(S)$, where the map $\alpha$ is given by
extending $\alpha((g,n))=g$ multiplicatively to a monoid homomorphism
$\alpha:\mathcal{F}(G_{0}\times\mathbb{N})\rightarrow\mathcal{F}(G_{0})$.
This map $\alpha$ is called the \emph{unlabelling homomorphism. }The
valuation function on $G$ is extended to indexed sequences naturally
by composing it with the unlabelling homomorphism, i.e. $\overline{v}_{g}(S)=v_{g}(\alpha(S))$.
We consider elements of $G_{0}\times\mathbb{N}$ as indexed elements
of $G_{0}$, and we define the \emph{order} of such an element $(g,n)$
to be the order $\mbox{ord}(g)$ of $g$ in $G$, i.e. the smallest
positive integer $m$ such that $mg=0$ in $G$.

A sequence (resp. indexed sequence) over a group $G$ is defined as
a sequence (resp. indexed sequence) over its set of nonzero elements
$G^{\bullet}$.

Define an indexed sequence over $G_{0}$ to be \emph{squarefree} if
$v_{(g,n)}(S)\leq1$ for all pairs $(g,n)\in G_{0}\times\mathbb{N}$.
Unless otherwise specified all the indexed sequences we study are
assumed to be squarefree. 

If $S$ is an indexed sequence over $G$ and $G_{0}$ is a subset
of $G$, then define $S_{G_{0}}$ to be

\[
S_{G_{0}}=\prod_{(g,n)\in G_{0}\times\mathbb{N}}(g,n)^{v_{(g,n)}(S)}.
\]

If $G_{0}$ is a subgroup of $G$ then $S_{G_{0}}$ will simultaneously
be considered an indexed sequence over $G_{0}$.

The \emph{sum }function $\sigma:\mathcal{F}(G^{\bullet})\rightarrow G$
is defined on a sequence $S$ as 
\[
\sigma(S)=\sum_{g\in G^{\bullet}}v_{g}(S)\cdot g.
\]

This function extends naturally to a sum function $\overline{\sigma}:\mathcal{F}(G^{\bullet}\times\mathbb{N})\rightarrow G$
on indexed sequences given by $\overline{\sigma}(T)=\sigma(\alpha(T))$.

Define the \emph{set of subsums}, or \emph{sumset}, of an indexed
sequence $S$ over $G$ to be the set 
\[
\Sigma(S)=\{\overline{\sigma}(T):T\mid S\}.
\]

We will say that $S$ has \emph{full sumset} in $G$ if $\Sigma(S)=G$. 

Overviews of progress on computing sumset sizes of several types of
sequences are made in Chapter 5.2 of \cite{GH}, Section 2 of \cite{GR},
and Part 1 of \cite{Gr}. Specialized versions of this problem, such
as counting the number of subsequences of a given sum \cite{GG1},
and counting sums of only short subsequences \cite{Ge1}, have been
fruitful objects of study.

An indexed sequence $S$ is \emph{zero-sum} if $\overline{\sigma}(S)=0$,
\emph{zero-sum free} if there does not exist $1\neq T\mid S$ with
$\overline{\sigma}(T)=0$, and \emph{irreducible }or \emph{minimal
zero-sum} if it differs from $1$, is zero-sum, and has no nontrivial
proper zero-sum subsequence.

Define the\emph{ monoid of zero-sum indexed sequences over $G$ }to
be the monoid
\[
\mathcal{T}(G)=\{S\in\mathcal{F}(G^{\bullet}\times\mathbb{N}):\overline{\sigma}(S)=0\}.
\]

The set $\mathcal{A}(G)\subset\mathcal{T}(G)$ is defined as
\[
\mathcal{A}(G)=\{S\in\mathcal{T}(G):S\mbox{ is irreducible}\}.
\]

We call $\mathnormal{Z}(G)=\mathcal{F}(\mathcal{A}(G))$ the \emph{factorization
monoid} of $G$. For any $S\in\mathcal{A}(G)$ we denote by $[S]$
the corresponding generator in $\mathnormal{Z}(G)$. Let $\pi:\mathnormal{Z}(G)\rightarrow\mathcal{T}(G)$
denote the monoid homomorphism taking a formal product $\prod_{i\leq m}[S_{i}]$
of irreducibles to the zero-sum indexed sequence $\prod_{i\leq m}S_{i}$.
An \emph{irreducible factorization} of a zero-sum indexed sequence
$S\in\mathcal{T}(G)$ is an element of $\mathnormal{\pi^{-1}(S)}$.
Equivalently, an irreducible factorization of $S$ is a way of writing
$S=\prod_{i\leq m}S_{i}$, where the $S_{i}$ are irreducible subsequences
of $S$. We say that $S$ is a \emph{unique factorization indexed
sequence }(UFIS) if $|\pi^{-1}(S)|=1$.

Zero-sum indexed sequences and UFIS's have interpretations in algebraic
number theory when $G$ is taken to be the ideal class group of the
integer ring of an algebraic number field \cite{BC}.

When $S$ is an indexed sequence over $G$, let $|S|=\sum_{g\in G^{\bullet}}\overline{v}_{g}(S)$.
This is referred to as the \emph{length} of $S$. Define the \emph{cross
number} of an indexed sequence $S$ to be 
\[
\mathsf{k}(S)=\sum_{g\in G^{\bullet}}\frac{\overline{v}_{g}(S)}{\mbox{ord}(g)}.
\]

In algebraic number theory it is often more natural to study the cross
number of an indexed sequence than to study its length \cite{BC}. 

Three cross number invariants of a finite abelian group $G$ are defined
as follows. The \emph{little cross number of $G$} is defined as
\[
\mathsf{k}(G)=\max\{\mathsf{k}(S):S\mbox{ is zero-sum free over \ensuremath{G}}\},
\]
the \emph{cross number of $G$} is defined as
\[
\mathsf{K}(G)=\max\{\mathsf{k}(S):S\mbox{ is irreducible over \ensuremath{G}}\},
\]
and the $\mathsf{K}_{1}$ \emph{constant of $G$ }is defined as
\[
\mathsf{K}_{1}(G)=\max\{\mathsf{k}(S):S\mbox{ is a UFIS over \ensuremath{G}}\}.
\]

The constants $\mathsf{k}(G)$ and $\mathsf{K}(G)$ have been studied
intensely \cite{BC,BCMP,GG2,Ge2,GS1,KZ}. The cross number is itself
a number-theoretically natural alternative to the \emph{Davenport
constant }$\mathsf{D}(G)$ of a group, which is the maximal length
of any irreducible indexed sequence over $G$ \cite{GS2}. Chapter
2 of the recent book \cite{GR} by Geroldinger and Ruzsa summarizes
the main results on the Davenport constant.

Define $P^{-}(n)$ and $P^{+}(n)$ to be the smallest and largest
primes, respectively, dividing $n$. It is not difficult to show that
\[
\mathsf{k}(G)+\frac{1}{\mbox{exp}(G)}\leq\mathsf{K}(G)\leq\mathsf{k}(G)+\frac{1}{P^{-}(\mbox{exp}(G))},
\]
so $\mathsf{k}(G)$ and $\mathsf{K}(G)$ are closely related. These
inequalities follow by observing that removing any element of an irreducible
indexed sequence leaves a zero-sum free indexed sequence. Write $G$
as a direct sum of prime power order cyclic groups 
\[
{\displaystyle G=\bigoplus_{i=1}^{r}C_{p_{i}^{\alpha_{i}}}},
\]
and define

\[
\mathsf{k}^{*}(G)=\sum_{i=1}^{r}\Big(1-\frac{1}{p_{i}^{\alpha_{i}}}\Big).
\]

This is the conjectured value of $\mathsf{k}(G)$. We choose to study
the little cross number because our methods apply more cleanly to
it; maximal cross number irreducible indexed sequences generally have
an extra term of order $\mbox{exp}(G)$, unlike maximal zero-sum free
indexed sequences or UFIS's, which generally only have terms of prime
power order. 

However, information about $\mathsf{k}(G)$ is weaker than information
about $\mathsf{K}(G)$, since if 
\[
\mathsf{K}(G)=\mathsf{K}^{*}(G)=\mathsf{k}^{*}(G)+\frac{1}{\mbox{exp}(G)},
\]
its conjectured value, then $\mathsf{k}(G)=\mathsf{k}^{*}(G)$ for
that group $G$ as well. 

Krause and Zahlten conjectured the following \cite{KZ}, towards which
the most recent progress has been the results of Geroldinger and Grynkiewicz
on the structure of maximal cross number irreducible indexed sequences
\cite{GGr}.
\begin{conjecture}
\label{conj:littlecross}The equality $\mathsf{K}(G)=\mathsf{K}^{*}(G)$
holds for all finite abelian groups $G$, and therefore $\mathsf{k}(G)=\mathsf{k}^{*}(G)$
for all $G$ as well.
\end{conjecture}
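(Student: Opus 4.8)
The plan is to prove the two inequalities $\mathsf{k}(G)\geq\mathsf{k}^{*}(G)$ and $\mathsf{k}(G)\leq\mathsf{k}^{*}(G)$ separately, and then upgrade the resulting equality $\mathsf{k}(G)=\mathsf{k}^{*}(G)$ to the full statement $\mathsf{K}(G)=\mathsf{K}^{*}(G)$. The lower bound is the easy direction: writing $G=\bigoplus_{i=1}^{r}C_{p_{i}^{\alpha_{i}}}$ with basis $e_{1},\dots,e_{r}$, the sequence $\prod_{i=1}^{r}e_{i}^{\,p_{i}^{\alpha_{i}}-1}$ is zero-sum free because the coordinates are independent, and its cross number is exactly $\sum_{i}(1-1/p_{i}^{\alpha_{i}})=\mathsf{k}^{*}(G)$; appending to it a suitable element of order $\mbox{exp}(G)$ produces an irreducible sequence witnessing $\mathsf{K}(G)\geq\mathsf{K}^{*}(G)$. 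All the content is therefore in the matching upper bounds.

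For the upper bound on $\mathsf{k}$ I would induct on the number $r$ of cyclic factors. The base case $r=1$, and more generally the case of a $p$-group, is classical (Krause), so I may assume $\mathsf{k}(H)=\mathsf{k}^{*}(H)$ whenever $H$ is a $p$-group. The inductive step is precisely the reduction furnished by the paper's inductive theorem: write the target group as $G\oplus C_{p^{\alpha}}$, take a maximal zero-sum free indexed sequence $S$ over it, and analyze its structure through the projection $\rho\colon G\oplus C_{p^{\alpha}}\to C_{p^{\alpha}}$. The terms of $S$ split into those lying in $G$ (trivial $C_{p^{\alpha}}$-component) and the \emph{mixed} terms, whose image under $\rho$ is nonzero. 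Using the structural results on extremal sequences, I would bound the cross-number contribution of the mixed terms by $1-1/p^{\alpha}$ and show that the remaining, $G$-supported part is essentially a zero-sum free sequence over $G$, so that $\mathsf{k}(S)\leq\mathsf{k}^{*}(G)+(1-1/p^{\alpha})=\mathsf{k}^{*}(G\oplus C_{p^{\alpha}})$ by the inductive hypothesis.

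Upgrading $\mathsf{k}(G)=\mathsf{k}^{*}(G)$ to $\mathsf{K}(G)=\mathsf{K}^{*}(G)$ requires slightly more than the displayed inequalities, which give only $\mathsf{K}(G)\leq\mathsf{k}^{*}(G)+1/P^{-}(\mbox{exp}(G))$; one must show that the extra term of a maximal irreducible sequence genuinely has order $\mbox{exp}(G)$ rather than a smaller prime-power order, which is again a structural statement about extremal sequences. The genuine obstacle, however, lies in the bound on the mixed terms during the inductive step. When the prime factors of $|G|$ are far apart, a counting and averaging argument forces the mixed terms to align with the $C_{p^{\alpha}}$-direction and keeps their contribution below $1-1/p^{\alpha}$, which is exactly the hypothesis under which the paper's inductive theorem succeeds. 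Without that separation, the $p$-part of $G\oplus C_{p^{\alpha}}$ can interact with the rest of $G$ to produce extremal configurations --- of the kind governed by the conjecture on maximal UFIS's over elementary $p$-groups posed at the end of the paper --- that the induction cannot see, and it is this interaction, rather than any single calculation, that keeps the general conjecture open.
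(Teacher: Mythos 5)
The statement you are asked about is Conjecture \ref{conj:littlecross}, which the paper does not prove and which remains open; the paper only establishes special cases under the wideness hypotheses of Theorem \ref{thm:littlecross}. Your proposal does not close this gap, and in fact your final sentence concedes the point. Concretely, the inductive step you invoke requires $p\prec\mbox{exp}(G)$, i.e.\ the inequality \eqref{eq:wide}, and this genuinely fails for general groups: for example $2$ is not wide with respect to $3\cdot5\cdot7\cdot11\cdot13$, since $\prod_{q}\frac{q^{2}-1}{q^{2}-q}=\frac{4}{3}\cdot\frac{6}{5}\cdot\frac{8}{7}\cdot\frac{12}{11}\cdot\frac{14}{13}>2=\frac{p}{p-1}$. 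There is no known ``counting and averaging argument'' that controls the mixed (cross) terms without this hypothesis; Lemma \ref{lem:manyprimestructure} is exactly where the inequality is consumed, and removing it is the open problem. Moreover, even where wideness holds, the induction of Theorem \ref{thm:littlecross} only adjoins a \emph{cyclic} $p$-component for a new prime $p$, so it can never reach groups such as $C_{2}^{2}\oplus C_{3}^{2}$ whose $p$-components are noncyclic at more than one prime; your induction on the number of cyclic factors therefore does not exhaust all finite abelian groups even in principle.

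A second, independent gap is the claimed upgrade from $\mathsf{k}(G)=\mathsf{k}^{*}(G)$ to $\mathsf{K}(G)=\mathsf{K}^{*}(G)$. The paper's displayed inequalities give only $\mathsf{K}(G)\leq\mathsf{k}(G)+1/P^{-}(\mbox{exp}(G))$, and the implication stated in the conjecture runs in the opposite direction: $\mathsf{K}(G)=\mathsf{K}^{*}(G)$ implies $\mathsf{k}(G)=\mathsf{k}^{*}(G)$, not conversely. Showing that a maximal irreducible indexed sequence has its ``extra'' term of order exactly $\mbox{exp}(G)$ is itself an unresolved structural statement (this is precisely why the paper chooses to work with $\mathsf{k}$ rather than $\mathsf{K}$), so your last step assumes a result at least as hard as the one being proved. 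Your lower-bound construction is fine and is the standard one from \cite{KZ}, and the base case should be credited to Geroldinger \cite{Ge2} rather than Krause, but neither of these affects the substance: the upper bound for arbitrary $G$ is not established by this argument.
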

Similarly, define 
\[
\mathsf{K}_{1}^{*}(G)=\sum_{i=1}^{r}\frac{p_{i}^{\alpha_{i}}-1}{p_{i}^{\alpha_{i}}-p_{i}^{\alpha_{i}-1}}.
\]

Gao and Wang made the analogous conjecture about the $\mathsf{K}_{1}$
constant \cite{GW}.
\begin{conjecture}
\label{conj:cross}The equality $\mathsf{K}_{1}(G)=\mathsf{K}_{1}^{*}(G)$
holds for all finite abelian groups $G$.
\end{conjecture}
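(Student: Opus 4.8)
The plan is to establish the two inequalities $\mathsf{K}_{1}(G)\ge\mathsf{K}_{1}^{*}(G)$ and $\mathsf{K}_{1}(G)\le\mathsf{K}_{1}^{*}(G)$ separately, handling the latter by induction on the number $r$ of prime-power cyclic factors of $G$. The organizing observation is that $\mathsf{K}_{1}^{*}$ is additive over the decomposition $G=\bigoplus_{i=1}^{r}C_{p_{i}^{\alpha_{i}}}$, so $\mathsf{K}_{1}^{*}(G)=\sum_{i}\mathsf{K}_{1}^{*}(C_{p_{i}^{\alpha_{i}}})$, and that $\mathsf{K}_{1}^{*}(C_{p^{\alpha}})=\frac{p^{\alpha}-1}{p^{\alpha}-p^{\alpha-1}}=\sum_{j=0}^{\alpha-1}p^{-j}$. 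The lower bound is the tractable direction: for each factor one exhibits an explicit UFIS over $C_{p_{i}^{\alpha_{i}}}$ of cross number $\sum_{j=0}^{\alpha_{i}-1}p_{i}^{-j}$, places its support inside the corresponding summand of $G$, and multiplies these together. The content is then to check that the combined indexed sequence remains a UFIS, the point being that atoms cannot recombine across summands whose element orders fail to interact; this yields $\mathsf{K}_{1}(G)\ge\sum_{i}\mathsf{K}_{1}^{*}(C_{p_{i}^{\alpha_{i}}})=\mathsf{K}_{1}^{*}(G)$. The base case of the induction is the cyclic $p$-group $C_{p^{\alpha}}$, for which the equality should be verified directly.

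The heart of the matter is the upper bound, which I would reduce to the inductive statement $\mathsf{K}_{1}(G\oplus C_{p^{\alpha}})\le\mathsf{K}_{1}(G)+\mathsf{K}_{1}^{*}(C_{p^{\alpha}})$; combined with the inductive hypothesis $\mathsf{K}_{1}(G)=\mathsf{K}_{1}^{*}(G)$ this gives the conjectured value for $G\oplus C_{p^{\alpha}}$. Put $H=C_{p^{\alpha}}$ and $G'=G\oplus H$, let $S$ be a UFIS attaining $\mathsf{k}(S)=\mathsf{K}_{1}(G')$, and write its unique factorization $S=\prod_{i}A_{i}$ into atoms. Splitting $G'=G\oplus H$, I would classify each $A_{i}$ as $G$-pure (supported on $G\oplus 0$), $H$-pure (supported on $0\oplus H$), or mixed, so that by additivity of the cross number over a factorization, $\mathsf{k}(S)$ is the sum of the three partial cross numbers. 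The key structural claims are that the product of the $G$-pure atoms is again a UFIS over $G$, hence contributes at most $\mathsf{K}_{1}(G)$, and likewise that the $H$-pure atoms contribute at most $\mathsf{K}_{1}(H)=\mathsf{K}_{1}^{*}(C_{p^{\alpha}})$; both rest on showing that the global uniqueness of factorization of $S$ descends to these sub-collections. What remains is to show that the mixed atoms contribute nothing to the optimum, i.e. that a maximal UFIS may be taken with no mixed atoms, or that any mixed atom can be rerouted into pure atoms without decreasing the cross number while preserving unique factorizability.

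Controlling the mixed atoms is where I expect the real difficulty, and it is exactly where the hypothesis that the prime factors of $\exp(G')$ are far apart becomes indispensable. A mixed atom uses elements with nonzero components in both $G$ and $H$, so its order is divisible by $p$ and by at least one prime dividing $\exp(G)$; the cross-number efficiency of such an atom is governed by these orders, and when $p$ is far from the remaining primes the mixed atoms are too inefficient to appear in an optimal sequence, which bounds their number and total contribution and closes the induction. Removing this hypothesis — proving the conjecture in full — forces one to understand dense families of mixed atoms, and the worst case concentrates in a single prime: once $p$ already divides $\exp(G)$, the mixed atoms live inside the $p$-component and the problem degenerates into the structure of maximal UFIS's over elementary and higher-rank $p$-groups. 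This is precisely the obstacle flagged in the abstract, and I would not expect the inductive scheme above to settle the general conjecture without first resolving that $p$-group structure question; the scheme does, however, dispose of the cyclic and rank-two cases, in which at most one such mixed prime interaction occurs at a time.
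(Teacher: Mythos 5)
The statement you are asked to prove is labelled a \emph{conjecture} in the paper, and the paper does not prove it; it only establishes special cases (Theorem \ref{thm:additive}, Corollary \ref{cor:main}, Proposition \ref{prop:twoprime}) under the hypothesis that the relevant primes are ``$2$-wide'' apart. Your proposal is, in substance, a sketch of exactly that partial-progress strategy --- induction adjoining one cyclic factor $C_{p^{\alpha}}$ at a time, splitting the extremal UFIS into an $H$-part and a $G$-part, and using the separation of primes to rule out the mixed/cross contributions --- together with an honest admission that the mixed atoms cannot be controlled in general. That admission is correct, and it means the proposal does not prove the statement. Concretely, the gap sits in two places. First, the elimination of mixed atoms genuinely requires the condition $p\prec_{2}\exp(G)$ (inequality \eqref{eq:2wide}); this is not a technical convenience but the precise point where the paper's Lemma \ref{lem:manyprimestructure} needs the hypothesis, and it fails for infinitely many $G$ (e.g.\ when $\exp(G)$ has several small prime factors close together). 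Second, your induction can only adjoin a factor $C_{p^{\alpha}}$ for a prime $p$ \emph{not} already dividing $\exp(G)$ (wideness presupposes $p\nmid\exp(G)$), so the scheme structurally cannot reach groups with a high-rank $p$-component: to build $C_{p}^{n}$ you would need base cases that are themselves open, reducing to Conjecture \ref{conj:elementaryp} on the Narkiewicz constant $\mathsf{N}_{1}(C_{p}^{n})=np$, known only for $p\in\{2,3\}$, rank $\le 2$, and a few sporadic cases.

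Two smaller points. Your lower bound $\mathsf{K}_{1}(G)\ge\mathsf{K}_{1}^{*}(G)$ is indeed the easy direction and is in the literature (Gao--Wang \cite{GW}); the paper takes it as known. And your decomposition of the factorization into $G$-pure, $H$-pure, and mixed \emph{atoms} differs slightly from the paper's bookkeeping, which classifies individual \emph{terms} by the prime factorization of their order and amalgamates subsequences (Lemma \ref{lem:amalgamation}) to show a dense UFIS has exactly $p$ terms of each order $p^{a}$ and no cross terms; the two viewpoints are essentially equivalent here (a mixed atom must contain a cross term), but the term-level version is what makes the cross-number estimates in Lemma \ref{lem:manyprimestructure} go through. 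In short: as an account of the paper's Theorem \ref{thm:additive} your outline is on the right track, but as a proof of Conjecture \ref{conj:cross} it is not a proof, and you have correctly located why.
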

Just as $\mathsf{K}(G)$ is the cross number variant of $\mathsf{D}(G)$,
so too $\mathsf{K}_{1}(G)$ is the analog of the \emph{Narkiewicz
constant} $\mathsf{N}_{1}(G)$ of $G$, which is the maximal length
of a UFIS over $G$. Narkiewicz defined this latter constant to quantify
non-unique factorization in domains without unique factorization \cite{GG2,GGW,GH,N,NS}.

It is not difficult to see that indexed sequences of sufficient length
or cross number over any nontrivial $G$ will not be zero-sum free,
irreducible, or unique factorization, so all of the above group invariants
are finite.

In this paper we specifically study the constants $\mathsf{k}(G)$
and $\mathsf{K}_{1}(G)$. In both cases, the lower bound is known
by construction \cite{GW,KZ}, so it suffices to prove 
\[
\mathsf{k}(G)\leq\mathsf{k}^{*}(G),
\]
and 
\[
\mathsf{K}_{1}(G)\leq\mathsf{K}_{1}^{*}(G),
\]
respectively. These have already been shown in the following special
cases.
\begin{thm}
\label{thm:previouslittlecross}If $G$ is a group of one of the following
forms, then $\mathsf{K}(G)=\mathsf{K}^{*}(G)$, and hence $\mathsf{k}(G)=\mathsf{k}^{*}(G)$.\end{thm}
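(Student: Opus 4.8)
Since the statement collects results already established in the literature, the proof amounts to citing, for each listed form of $G$, a source that proves the upper bound $\mathsf{K}(G)\le\mathsf{K}^{*}(G)$: the matching lower bound holds by explicit construction, and once $\mathsf{K}(G)=\mathsf{K}^{*}(G)=\mathsf{k}^{*}(G)+\frac{1}{\exp(G)}$ is known the passage to $\mathsf{k}(G)=\mathsf{k}^{*}(G)$ is immediate from the remark recorded above. To organize and verify the cited cases I would reduce each to a statement about zero-sum free sequences. Deleting one term $g$ from an irreducible sequence $S$ leaves a zero-sum free sequence $T$ with $\mathsf{k}(S)=\mathsf{k}(T)+\frac{1}{\mathrm{ord}(g)}$, so bounding $\mathsf{k}(T)\le\mathsf{k}^{*}(G)$ already yields $\mathsf{k}(G)\le\mathsf{k}^{*}(G)$. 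Upgrading this to $\mathsf{K}(G)\le\mathsf{K}^{*}(G)$ is genuinely stronger, since a priori $\frac{1}{\mathrm{ord}(g)}$ can be as large as $\frac{1}{P^{-}(\exp(G))}$; one must additionally show that an extremal irreducible sequence can be taken with its undeletable term of order exactly $\exp(G)$, which is the refined structural input distinguishing the $\mathsf{K}$ statement from the $\mathsf{k}$ statement.

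The foundational entry is the $p$-group case of Krause and Zahlten \cite{KZ}, which I would verify by their induction on the group structure. Writing $G=G'\oplus C_{p^{\alpha}}$ with $p^{\alpha}=\exp(G)$, one splits a zero-sum free sequence according to its projection onto $C_{p^{\alpha}}$ together with the induced data over $G'$, and bounds the cross-number contribution one prime-power level at a time. The decisive feature of the single-prime setting is that every term has order a power of $p$, so the weight $\frac{1}{\mathrm{ord}(g)}$ scales predictably under passage to the subgroups $p^{j}G$ and their quotients; this compatibility is exactly what lets the induction close and deliver the bound $\sum_{i}\bigl(1-\frac{1}{p^{\alpha_{i}}}\bigr)$, and it is also what forces the extremal extra term to sit at the top level, of order $\exp(G)$.

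For the entries involving more than one prime the natural strategy is to pass to the primary decomposition $G=\bigoplus_{i}G_{(p_{i})}$ and to argue that the cross number is controlled by its contributions on the individual $p_{i}$-primary components, each of which is a $p_{i}$-group already handled above. The difficulty—and the reason only special cases are available—is that the projection of a zero-sum free sequence onto a single primary component need not be zero-sum free: a subsequence can vanish in one component while remaining nonzero in $G$, so the per-prime bounds do not simply add, and there is no uniform argument covering all $G$. I expect this control of cross-prime interaction to be the main obstacle; in the listed forms it is tamed only because the rank is very small or because a single prime dominates the exponent, and it is precisely the point at which the present paper's inductive theorem and its ``primes far apart'' hypotheses are later brought in to push beyond these known cases.
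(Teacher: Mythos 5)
Your proposal matches the paper's treatment: Theorem \ref{thm:previouslittlecross} is stated without proof as a compilation of known results, case (1) being cited to \cite{Ge2} and cases (2)--(3) to \cite{GS1}, with the passage from $\mathsf{K}(G)=\mathsf{K}^{*}(G)$ to $\mathsf{k}(G)=\mathsf{k}^{*}(G)$ following, exactly as you say, from the inequality $\mathsf{k}(G)+\frac{1}{\exp(G)}\leq\mathsf{K}(G)$ together with the constructive lower bound $\mathsf{k}(G)\geq\mathsf{k}^{*}(G)$. One attribution to correct: the $p$-group case is due to Geroldinger \cite{Ge2}, not to Krause and Zahlten, who in \cite{KZ} formulated the conjecture rather than proving this case.
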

\begin{enumerate}
\item \cite{Ge2} $G$ is a finite abelian $p$-group.
\item \cite{GS1} $G=C_{p^{m}}\oplus C_{p^{n}}\oplus C_{q}^{s}$ with distinct
primes $p,q$ and $m,n,s\in\mathbb{N}$.
\item \cite{GS1} $G=\oplus_{i=1}^{r}C_{p_{i}}\oplus C_{q}^{s}$ with distinct
primes $p_{1},\ldots,p_{r},q$, and integers $n_{1},\ldots,n_{r},s\in\mathbb{N}$,
such that either $r\leq3$ and $p_{1}p_{2}\cdots p_{r}\neq30$ or
$p_{k}\geq k^{3}$ for every $1\leq k\leq r$.\end{enumerate}
\begin{thm}
\label{thm:previouscross}\cite{K} If $G$ is a group of one of the
following forms, then $\mathsf{K}_{1}(G)=\mathsf{K}_{1}^{*}(G)$.\end{thm}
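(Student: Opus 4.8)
The plan is to prove the upper bound $\mathsf{K}_1(G)\le\mathsf{K}_1^*(G)$ for each of the listed forms, the matching lower bound being supplied by the explicit constructions of \cite{GW,KZ}. I would fix a UFIS $S$ over $G$ of maximal cross number and write its unique irreducible factorization, the single element $[A_1]\cdots[A_k]$ of $\pi^{-1}(S)$ with each $A_i\in\mathcal{A}(G)$. Everything hinges on the fact that factorization is performed on the \emph{indexed} sequence: two indexed terms that unlabel to the same $g\in G$ may never be swapped between atoms without producing a second, distinct factorization. Thus maximality together with uniqueness imposes a rigidity on how equal elements are apportioned among the $A_i$, and the goal is to convert that rigidity into, for each $j$, a bound on the total number of order-$p^j$ terms that $S$ can carry, since each such term contributes exactly $p^{-j}$ to $\mathsf{k}(S)$.

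The first real step is a clean necessary condition: \emph{in a UFIS the atoms of its factorization have pairwise disjoint supports}. Indeed, if a single element $g$ occurred in two distinct atoms $A_i$ and $A_j$, then exchanging one indexed copy of $g$ between them would leave both atoms zero-sum and irreducible while changing the induced partition of indexed terms, yielding a second factorization. This immediately disqualifies naive sequences such as $g^{p+1}(-g)$ over $C_p$, whose only candidate factorization $\{g^p,\,g(-g)\}$ has overlapping supports. The difficulty that remains, and the crux of the whole problem, is that disjoint supports are far from sufficient: over $C_9$ the sequence $3^3\,(1)(8)(2)(7)$ has the disjoint-support factorization $\{3^3,(1)(8),(2)(7)\}$ yet is \emph{not} a UFIS, because splicing the order-$3$ terms into the order-$9$ terms produces the rival factorization $\{(1)(2)\,3\,3,\ (8)(7)\,3\}$. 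The structural heart of the argument is thus to bound $\sum_i\mathsf{k}(A_i)$ over families of disjoint-support atoms that are moreover immune to every such splice.

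With the disjoint-support lemma as leverage I would run an induction on the exponent for the cyclic $p$-group $C_{p^\alpha}$, organized by the telescoping identity $\mathsf{K}_1^*(C_{p^\alpha})=\sum_{j=0}^{\alpha-1}p^{-j}=\mathsf{K}_1^*(C_{p^{\alpha-1}})+p^{-(\alpha-1)}$. The natural reduction passes to the quotient $q\colon C_{p^\alpha}\to C_{p^\alpha}/K\cong C_{p^{\alpha-1}}$ by the order-$p$ subgroup $K$: one discards the order-$p$ terms, which $q$ kills, argues that what survives is a UFIS over $C_{p^{\alpha-1}}$ to which induction applies, and accounts for the cross number of the discarded layer so that the total does not exceed $\mathsf{K}_1^*(C_{p^{\alpha-1}})+p^{-(\alpha-1)}$. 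The remaining listed forms I would reach by combining this layer-peeling with a reduction across Sylow components; here one must tread carefully, since over a composite modulus an atom of $C_n$ need not factor as a product of atoms of its prime-power components, and it is exactly this failure that keeps the unconditional theorem confined to the listed forms.

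The hard part will be the lifting step: proving that the survivor $q(S)$ inherits \emph{unique} factorization, equivalently that no factorization of the quotient sequence pulls back to a rival factorization of $S$. Atoms of different orders interlock, and an atom of $S$ meeting the kernel $K$ may fragment or fuse under $q$, so one must simultaneously exclude the creation of spurious factorizations and the concealment of genuine ones---this is the $C_9$ splicing phenomenon, now threatening at every level of the subgroup filtration rather than only at the top. The higher-rank version of this step, in which the bottom layer $K$ is an elementary abelian $p$-group and the number of possible splices explodes, is what defeats the method for groups of larger rank, in line with the structural conjecture on elementary $p$-groups raised later in the paper.
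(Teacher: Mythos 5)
You should first note that the paper does not prove this statement at all: Theorem \ref{thm:previouscross} is a summary of results of Kriz, quoted from \cite{K} as background, so there is no in-paper proof to compare your argument against. Judged on its own terms, your proposal is a program rather than a proof, and it stops exactly where the real work begins. Your one concrete step --- that the atoms of the unique factorization of a UFIS have pairwise disjoint supports, obtained by swapping two indexed copies of the same group element between atoms --- is correct (and is essentially a special case of the paper's Lemma \ref{lem:intersect}), and your $C_9$ example rightly shows that this condition is far from sufficient. But the quantitative heart of the theorem, bounding $\sum_i \mathsf{k}(A_i)$ over families of disjoint-support, splice-immune atoms, is precisely the ``lifting step'' that you yourself flag as unresolved: you give no argument that the image of a maximal UFIS under the quotient by the bottom layer again divides a UFIS, no bound on the number of order-$p$ terms discarded (you need at most $p$ of them for your telescoping identity to close, which is an Amalgamation-Lemma-type statement requiring its own proof), and no mechanism for controlling cross terms once two distinct primes are present.

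Moreover, your sketch is aimed at the cyclic group $C_{p^{\alpha}}$, which is not one of the listed forms; the theorem concerns $C_{p^m}\oplus C_p$, $C_{p^m}\oplus C_q^{2}$, and $C_{p^m}\oplus C_r^{n}$ with $r\in\{2,3\}$. These cases require inputs your outline never touches, notably the known values of the Narkiewicz constants of $C_q^{2}$, $C_2^{n}$ and $C_3^{n}$ from \cite{G}, together with a genuine two-prime argument of the kind the present paper develops (only under wideness hypotheses) in Lemmas \ref{lem:amalgamation} and \ref{lem:manyprimestructure}. As it stands, the proposal correctly identifies the obstructions but does not overcome any of them, so there is a genuine gap.
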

\begin{enumerate}
\item $C_{p^{m}}\oplus C_{p}$, $p$ prime;
\item $C_{p^{m}}\oplus C_{q}^{2}$, $p$, $q$ distinct primes;
\item $C_{p^{m}}\oplus C_{r}^{n}$, $p\neq r$ prime, $r=2,3$.
\end{enumerate}
To improve on these theorems, we develop structural results on the
extremal indexed sequences of interest. The following definition,
motivated by an argument of Girard \cite{Gi}, will be helpful to
introduce.
\begin{defn}
A zero-sum free indexed sequence $S$ over $G$ is \emph{dense} if
$\mathsf{k}(S)=\mathsf{k}(G)$ and $|S|=\min\{|T|:\mathsf{k}(T)=\mathsf{k}(G),T\mbox{ is zero-sum free}\}$.

Similarly, UFIS $S$ is \emph{dense }if $\mathsf{k}(S)=\mathsf{K}_{1}(G)$
and $|S|=\min\{|T|:\mathsf{k}(T)=\mathsf{K}_{1}(G),T\mbox{ is a UFIS}\}.$
\end{defn}
In Section \ref{sec:densestructure}, we will show that subject to
certain conditions dense indexed sequences of both types have few
elements of each order, and mostly elements of prime power order.
In the rest of this introduction we define the conditions under which
these arguments hold and state our main results. 

For convenience, we will call a term whose order has more than one
prime factor a \emph{cross term}, following Kriz \cite{K}. The main
goal of our arguments is to show the nonexistence of cross terms in
dense indexed sequences over certain groups.

We introduce the following definition, which will prove essential
in Sections \ref{sec:densestructure} and \ref{sec:maintheorem}.
It turns out that when the primes dividing $\mbox{exp}(G)$ are far
apart, it is easier to bound the cross terms in dense indexed sequences
over $G$.
\begin{defn}
\label{def:wide}A prime $p$ is \emph{wide with respect to} a positive
integer $n$ if $p\nmid n$ and, given that the prime factorization
of $n$ is $n=q_{1}^{\alpha_{1}}\cdots q_{r}^{\alpha_{r}}$, the inequality
\begin{equation}
\frac{p}{p-1}\geq\prod_{j=1}^{r}\frac{q_{j}^{\alpha_{j}+1}-1}{q_{j}^{\alpha_{j}+1}-q_{j}^{\alpha_{j}}}\label{eq:wide}
\end{equation}
holds. In such a case we write $p\prec n$. The empty product is taken
to be $1$. A positive integer $n=q_{1}^{\alpha_{1}}q_{2}^{\alpha_{2}}\cdots q_{r}^{\alpha_{r}}$
is \emph{wide} if, assuming that $q_{1}<q_{2}<\cdots<q_{r}$, for
each $i\in[1,r-1]$, 
\[
q_{i}\prec\prod_{j=i+1}^{r}q_{j}^{\alpha_{j}}.
\]

A prime $p$ is \emph{$2$-wide with respect to} $n=q_{1}^{\alpha_{1}}\cdots q_{r}^{\alpha_{r}}$
if $p\nmid n$ and
\begin{equation}
\frac{p^{2}+2p-2}{p^{2}}\geq\prod_{j=1}^{r}\frac{q_{j}^{\alpha_{j}+1}-1}{q_{j}^{\alpha_{j}+1}-q_{j}^{\alpha_{j}}},\label{eq:2wide}
\end{equation}
in which case we write $p\prec_{2}n$. Also, $n$ is \emph{$2$-wide}
if, given $q_{1}<q_{2}<\cdots<q_{r}$, we have for each $i\in[1,r-1]$,
\[
q_{i}\prec_{2}\prod_{j=i+1}^{r}q_{j}^{\alpha_{j}}.
\]

\end{defn}
The main drawback of considering cross number instead of length is
that when $\mbox{exp}(G)$ has many prime factors cross number is
difficult to handle. Our methods extend previous results about $\mathsf{k}(G)$
and $\mathsf{K}_{1}(G)$ when $\mbox{exp}(G)$ has a small number
of prime factors to cases where $\exp(G)$ is wide and $2$-wide,
respectively.

Our main result towards the Conjecture \ref{conj:littlecross} is
the following.
\begin{thm}
\label{thm:littlecross}If $G$ is a finite abelian group and $p$
is a prime satisfying $p\prec\mbox{exp}(G)$, then 
\[
\mathsf{k}(C_{p^{\alpha}}\oplus G)=\mathsf{k}(G)+\mathsf{k}(C_{p^{\alpha}})
\]
for all $\alpha\in\mathbb{N}$. In particular, if $\mathsf{k}(G)=\mathsf{k}^{*}(G)$
then $\mathsf{k}(C_{p^{\alpha}}\oplus G)=\mathsf{k}^{*}(C_{p^{\alpha}}\oplus G)$
as well.
\end{thm}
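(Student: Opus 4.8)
The plan is to prove the theorem by establishing the nontrivial inequality $\mathsf{k}(C_{p^{\alpha}}\oplus G)\leq\mathsf{k}(G)+\mathsf{k}(C_{p^{\alpha}})$, since the reverse direction follows immediately from concatenating extremal zero-sum free indexed sequences over the two direct summands (their orders are coprime because $p\prec\exp(G)$ forces $p\nmid\exp(G)$, so the concatenation remains zero-sum free). Let $H=C_{p^{\alpha}}\oplus G$ and let $S$ be a \emph{dense} zero-sum free indexed sequence over $H$, so that $\mathsf{k}(S)=\mathsf{k}(H)$ and $S$ has minimal length among such sequences. I would invoke the structural results promised in Section~\ref{sec:densestructure}, which show that when the primes dividing $\exp(H)$ are suitably far apart—precisely the hypothesis $p\prec\exp(G)$—a dense sequence $S$ has no cross terms, i.e.\ every term of $S$ has prime power order.

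Once every term of $S$ has prime power order, I would partition the terms of $S$ according to which prime divides their order. Write $S=S_{p}\cdot S'$, where $S_{p}$ collects the terms whose order is a power of $p$ and $S'$ collects the rest, so that $\mathsf{k}(S)=\mathsf{k}(S_{p})+\mathsf{k}(S')$. The key observation is that a term of $p$-power order must lie in the subgroup whose projection to $G$ is trivial: since $\gcd(p,\exp(G))=1$, an element of $H=C_{p^{\alpha}}\oplus G$ has $p$-power order exactly when its $G$-component vanishes, hence $S_{p}$ is naturally an indexed sequence over $C_{p^{\alpha}}$. Dually, a term whose order is a power of some prime $q\neq p$ has trivial $C_{p^{\alpha}}$-component, so $S'$ is naturally an indexed sequence over $G$. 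I would then argue that the zero-sum freeness of $S$ forces both $S_{p}$ to be zero-sum free over $C_{p^{\alpha}}$ and $S'$ to be zero-sum free over $G$: any zero-sum subsequence of $S_{p}$ or of $S'$ would be a zero-sum subsequence of $S$, contradicting zero-sum freeness.

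With this decomposition in hand the bound is immediate: $\mathsf{k}(S_{p})\leq\mathsf{k}(C_{p^{\alpha}})$ and $\mathsf{k}(S')\leq\mathsf{k}(G)$ by the definitions of the little cross number of each summand, whence
\[
\mathsf{k}(H)=\mathsf{k}(S)=\mathsf{k}(S_{p})+\mathsf{k}(S')\leq\mathsf{k}(C_{p^{\alpha}})+\mathsf{k}(G),
\]
completing the proof of the main equality. The final sentence of the theorem is then a routine consequence: if $\mathsf{k}(G)=\mathsf{k}^{*}(G)$, then since $\mathsf{k}^{*}$ is additive over the prime-power-order cyclic factors and $\mathsf{k}(C_{p^{\alpha}})=\mathsf{k}^{*}(C_{p^{\alpha}})=1-p^{-\alpha}$ (the $p$-group case being known), we get $\mathsf{k}(C_{p^{\alpha}}\oplus G)=\mathsf{k}^{*}(C_{p^{\alpha}}\oplus G)$.

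The main obstacle is not any of the splitting arguments above, which are elementary once the structure is known, but rather the input from Section~\ref{sec:densestructure} that a dense sequence over $H$ has no cross terms. That is precisely where the wideness hypothesis $p\prec\exp(G)$ is consumed: the inequality~\eqref{eq:wide} is engineered so that replacing a hypothetical cross term of order divisible by $p$ and by primes of $\exp(G)$ by terms of prime power order strictly decreases the length while preserving (or improving) the cross number and zero-sum freeness, contradicting minimality. I would therefore be careful to state exactly which form of the no-cross-term lemma is needed and to verify that the hypothesis of the theorem matches the hypothesis of that lemma; the delicate point is ensuring that the wideness condition relating $p$ to $\exp(G)$ is strong enough to eliminate \emph{all} cross terms whose order involves $p$, not merely those of a single composite order.
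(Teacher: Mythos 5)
There is a genuine gap: your argument hinges on the claim that a dense zero-sum free indexed sequence $S$ over $C_{p^{\alpha}}\oplus G$ has \emph{no} cross terms, so that every term lies in one of the two summands. That is not what Section~\ref{sec:densestructure} delivers, and it is not implied by the hypothesis. The Amalgamation Lemma only caps the number of elements of each order $\ell$ with $p\mid\ell$ at $p-1$; it does not force that count to zero. Worse, for cross terms whose order involves only the primes of $\exp(G)$, the hypothesis $p\prec\exp(G)$ gives no control at all: the theorem assumes nothing about how the primes of $\exp(G)$ relate to one another (only $\exp(G')$ need not be wide, and $G$ is arbitrary), so such terms cannot be ruled out. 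Consequently your decomposition $S=S_{p}\cdot S'$ with ``$S'$ naturally an indexed sequence over $G$'' breaks down: a term of order $p^{a}q^{b}$ has nontrivial components in both summands and belongs to neither piece, and the inequality $\mathsf{k}(S')\leq\mathsf{k}(G)$ has no justification for it.

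The paper's proof avoids eliminating cross terms. It first pins down only the \emph{pure} $p$-power part: combining the Amalgamation Lemma (at most $p-1$ elements of each order divisible by $p$, since $\alpha_{1,2}=0$ here) with Lemma~\ref{lem:manyprimestructure} (at least $p-1$ elements of each order $p^{a}$, which is where $p\prec\exp(G)$ is consumed), the subsequence $S_{H}$ supported on $H=C_{p^{\alpha}}$ consists of exactly $p-1$ elements of each order $p^{a}$, $a\in[1,\alpha]$, hence has cross number $\mathsf{k}(C_{p^{\alpha}})$ and full sumset in $H$. Everything else --- cross terms included --- is then handled by projecting $G'\rightarrow G$: no subsum of $S(S_{H})^{-1}$ can lie in $H\setminus\{0\}$ (it would complete to a zero-sum with a subsequence of $S_{H}$), so the image of $S(S_{H})^{-1}$ under the projection is zero-sum free over $G$; and since projection only decreases orders it cannot decrease cross number, giving $\mathsf{k}(S(S_{H})^{-1})\leq\mathsf{k}(G)$. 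If you want to salvage your outline, you must either supply a proof that no term of $S$ has order divisible both by $p$ and by another prime (which the stated lemmas do not give), or replace the naive splitting by this full-sumset-plus-projection step.
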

We can prove the following by applying this result to Theorem \ref{thm:previouslittlecross}
part (1).
\begin{cor}
If $G$ is a finite abelian group, $\exp(G)$ is wide, and the $p$-components
of $G$ are all cyclic except possibly the $ $$P^{+}(\exp(G))$-component,
then $\mathsf{k}(G)=\mathsf{k}^{*}(G)$.
\end{cor}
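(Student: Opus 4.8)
The plan is to peel off the cyclic $p$-components of $G$ one prime at a time, from the smallest prime to the largest, applying Theorem \ref{thm:littlecross} at each step, and to use Theorem \ref{thm:previouslittlecross}(1) to dispose of the one possibly non-cyclic component as the base case of an induction.

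First I would fix notation. Write the prime factorization $\exp(G)=q_{1}^{\alpha_{1}}q_{2}^{\alpha_{2}}\cdots q_{r}^{\alpha_{r}}$ with $q_{1}<q_{2}<\cdots<q_{r}$, so that $q_{r}=P^{+}(\exp(G))$, and decompose $G$ into its $p$-components $G=\bigoplus_{i=1}^{r}H_{q_{i}}$. By hypothesis $H_{q_{i}}=C_{q_{i}^{\alpha_{i}}}$ is cyclic for each $i<r$, while $H_{q_{r}}$ is a possibly non-cyclic finite abelian $q_{r}$-group of exponent $q_{r}^{\alpha_{r}}$. Define $G_{r}=H_{q_{r}}$ and, for $i$ decreasing from $r-1$ to $1$, set $G_{i}=C_{q_{i}^{\alpha_{i}}}\oplus G_{i+1}$, so that $G_{1}=G$ and $\exp(G_{i+1})=\prod_{j=i+1}^{r}q_{j}^{\alpha_{j}}$ for each $i$.

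The base case is $\mathsf{k}(G_{r})=\mathsf{k}(H_{q_{r}})=\mathsf{k}^{*}(H_{q_{r}})$, which holds because $H_{q_{r}}$ is a finite abelian $q_{r}$-group, so Theorem \ref{thm:previouslittlecross}(1) applies. For the inductive step I would assume $\mathsf{k}(G_{i+1})=\mathsf{k}^{*}(G_{i+1})$ and check that the wideness of $\exp(G)$ supplies exactly the hypothesis needed to invoke Theorem \ref{thm:littlecross} with $p=q_{i}$ and $\alpha=\alpha_{i}$. Indeed, the definition of a wide integer gives $q_{i}\prec\prod_{j=i+1}^{r}q_{j}^{\alpha_{j}}$ for each $i\in[1,r-1]$, and since $\exp(G_{i+1})=\prod_{j=i+1}^{r}q_{j}^{\alpha_{j}}$, this is precisely the assertion $q_{i}\prec\exp(G_{i+1})$. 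The ``in particular'' clause of Theorem \ref{thm:littlecross} then yields $\mathsf{k}(G_{i})=\mathsf{k}^{*}(G_{i})$. After $r-1$ steps this produces $\mathsf{k}(G_{1})=\mathsf{k}(G)=\mathsf{k}^{*}(G)$, as desired.

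The argument is essentially bookkeeping, so the only point requiring care --- and the closest thing to an obstacle --- is verifying that the truncated product $\prod_{j=i+1}^{r}q_{j}^{\alpha_{j}}$ appearing in Definition \ref{def:wide} really coincides with $\exp(G_{i+1})$, so that the global wideness hypothesis on $\exp(G)$ transfers verbatim to the per-step hypothesis of Theorem \ref{thm:littlecross}. This relies on the observation that each cyclic factor $C_{q_{i}^{\alpha_{i}}}$ contributes its full order $q_{i}^{\alpha_{i}}$ to the exponent and that $\exp(H_{q_{r}})=q_{r}^{\alpha_{r}}$, so that no prime-power part of the exponent is lost when passing from $G_{i}$ to the truncated group $G_{i+1}$; this is exactly where the assumption that every component below $q_{r}$ is cyclic is used.
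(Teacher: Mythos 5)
Your proposal is correct and matches the paper's intended argument exactly: the paper simply states that the corollary follows by applying Theorem \ref{thm:littlecross} to Theorem \ref{thm:previouslittlecross} part (1), and your induction peeling off the cyclic $q_{i}$-components from smallest to largest, with the $q_{r}$-group as base case, is precisely that argument spelled out. The verification that $\exp(G_{i+1})=\prod_{j=i+1}^{r}q_{j}^{\alpha_{j}}$ so that wideness of $\exp(G)$ supplies the hypothesis $q_{i}\prec\exp(G_{i+1})$ at each step is the right point to check, and you check it correctly.
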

Explicitly, these are the groups of the form $G=C_{p_{1}^{\alpha_{1}}}\oplus C_{p_{2}^{\alpha_{2}}}\oplus\cdots\oplus C_{p_{r-1}^{\alpha_{r-1}}}\oplus H_{p_{r}}$,
where $H_{p_{r}}$ is an arbitrary finite abelian $p_{r}$-group.
Similarly, if we apply Theorem \ref{thm:littlecross} to Theorem \ref{thm:previouslittlecross}
part (2), we get the following. For $a,b\in\mathbb{N}$, the interval
notation $[a,b]$ will denote the sets of integers $\{m:a\leq m\leq b\}$.
\begin{cor}
If $G$ is a finite abelian group of the form 
\[
G=C_{p_{1}^{\alpha_{1}}}\oplus C_{p_{2}^{\alpha_{2}}}\oplus\cdots\oplus C_{p_{r-2}^{\alpha_{r-2}}}\oplus C_{p_{r-1}^{\alpha_{r-1}}}\oplus C_{p_{r-1}^{\alpha_{r-1}'}}\oplus C_{p_{r}}^{s},
\]
where the $p_{i}$ are distinct primes satisfying $p_{i}\prec p_{i+1}^{\alpha_{i+1}}\cdots p_{r-1}^{\alpha_{r-1}}p_{r}$,
for all $i\in[1,r-2]$,$\alpha_{r-1}\geq\alpha'_{r-1}$, and $s$
is a nonnegative integer, then $\mathsf{k}(G)=\mathsf{k}^{*}(G)$.
\end{cor}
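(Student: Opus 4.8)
The plan is to peel off the cyclic factors belonging to the small primes $p_1,\dots,p_{r-2}$ one at a time, smallest prime first, applying Theorem \ref{thm:littlecross} at each step, starting from a base group covered by Theorem \ref{thm:previouslittlecross} part (2). Concretely, for $i\in[1,r-1]$ set
\[
H_i=C_{p_i^{\alpha_i}}\oplus\cdots\oplus C_{p_{r-2}^{\alpha_{r-2}}}\oplus C_{p_{r-1}^{\alpha_{r-1}}}\oplus C_{p_{r-1}^{\alpha_{r-1}'}}\oplus C_{p_r}^{s},
\]
with the convention that for $i=r-1$ the leading cyclic factors are absent, so that $H_{r-1}=C_{p_{r-1}^{\alpha_{r-1}}}\oplus C_{p_{r-1}^{\alpha_{r-1}'}}\oplus C_{p_r}^{s}$ and $H_1=G$; note $H_i=C_{p_i^{\alpha_i}}\oplus H_{i+1}$. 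The base group $H_{r-1}$ has the shape $C_{p^m}\oplus C_{p^n}\oplus C_q^{s}$ with $p=p_{r-1}$, $q=p_r$, $m=\alpha_{r-1}$, $n=\alpha_{r-1}'$, so Theorem \ref{thm:previouslittlecross} part (2) gives $\mathsf{k}(H_{r-1})=\mathsf{k}^*(H_{r-1})$.

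First I would record that the relation $\prec$ is monotone under divisibility of its second argument: if $p\prec n$ and $m\mid n$ with $p\nmid m$, then $p\prec m$. This is immediate once one notes that each factor on the right of \eqref{eq:wide} equals
\[
\frac{q^{\alpha+1}-1}{q^{\alpha+1}-q^{\alpha}}=\sum_{k=0}^{\alpha}q^{-k},
\]
which is at least $1$ and nondecreasing in $\alpha$. Hence replacing $n$ by a divisor $m$ (removing primes or lowering exponents) weakly decreases the product on the right of \eqref{eq:wide}, while the left side $\frac{p}{p-1}$ is unchanged, so the inequality is preserved.

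Next I would compute $\exp(H_{i+1})$ for $i\in[1,r-2]$. Since $\alpha_{r-1}\geq\alpha_{r-1}'$, the $p_{r-1}$-part $C_{p_{r-1}^{\alpha_{r-1}}}\oplus C_{p_{r-1}^{\alpha_{r-1}'}}$ has exponent $p_{r-1}^{\alpha_{r-1}}$, and the part $C_{p_r}^{s}$ contributes $p_r$ when $s\geq1$ and nothing when $s=0$. Thus $\exp(H_{i+1})$ divides $N_i:=p_{i+1}^{\alpha_{i+1}}\cdots p_{r-1}^{\alpha_{r-1}}p_r$, with equality precisely when $s\geq1$. The hypothesis $p_i\prec N_i$ combined with the monotonicity above yields $p_i\prec\exp(H_{i+1})$; here $p_i\nmid\exp(H_{i+1})$ because the primes $p_1,\dots,p_r$ are distinct and $\exp(H_{i+1})$ involves only $p_{i+1},\dots,p_r$.

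Finally I would run the downward induction. For each $i$ from $r-2$ to $1$, assuming $\mathsf{k}(H_{i+1})=\mathsf{k}^*(H_{i+1})$ and using $p_i\prec\exp(H_{i+1})$, the ``in particular'' clause of Theorem \ref{thm:littlecross} gives $\mathsf{k}(C_{p_i^{\alpha_i}}\oplus H_{i+1})=\mathsf{k}^*(C_{p_i^{\alpha_i}}\oplus H_{i+1})$, that is $\mathsf{k}(H_i)=\mathsf{k}^*(H_i)$. After $r-2$ steps this gives $\mathsf{k}(H_1)=\mathsf{k}(G)=\mathsf{k}^*(G)$, as desired; when $r=2$ the induction is empty and the claim is exactly Theorem \ref{thm:previouslittlecross} part (2). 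I do not expect a serious obstacle: the only genuine verification is matching the supplied $\prec$-hypotheses to the exponents $\exp(H_{i+1})$ via the monotonicity lemma and the exponent computation above, after which the result is a formal chaining of the two quoted theorems.
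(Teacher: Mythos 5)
Your proof is correct and follows the same route the paper intends: the corollary is stated there as an immediate consequence of iterating Theorem \ref{thm:littlecross} starting from the base case $C_{p^m}\oplus C_{p^n}\oplus C_q^s$ of Theorem \ref{thm:previouslittlecross} part (2). Your added verification that $p\prec n$ is preserved when $n$ is replaced by a divisor (so that the hypotheses $p_i\prec p_{i+1}^{\alpha_{i+1}}\cdots p_{r-1}^{\alpha_{r-1}}p_r$ really do yield $p_i\prec\exp(H_{i+1})$, including when $s=0$) is a detail the paper leaves implicit, and it checks out.
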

Our main theorem towards the $\mathsf{K}_{1}$ conjecture, proved
in Section \ref{sec:maintheorem}, is the following.
\begin{thm}
\label{thm:additive}If $G$ is a finite abelian group and $p$ is
a prime with $p\prec_{2}\mbox{exp}(G)$, then
\[
\mathsf{K}_{1}(C_{p^{\alpha}}\oplus G)=\mathsf{K}_{1}(C_{p^{\alpha}})+\mathsf{K}_{1}(G)
\]
for all $\alpha\in\mathbb{N}$. In particular, if $\mathsf{K}_{1}(G)=\mathsf{K}_{1}^{*}(G)$
then $\mathsf{K}_{1}(C_{p^{\alpha}}\oplus G)=\mathsf{K}_{1}^{*}(C_{p^{\alpha}}\oplus G)$
as well. 
\end{thm}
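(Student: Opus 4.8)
The plan is to prove the stated identity by establishing the two inequalities separately, with essentially all of the difficulty concentrated in the upper bound $\mathsf{K}_1(C_{p^{\alpha}}\oplus G)\le \mathsf{K}_1(C_{p^{\alpha}})+\mathsf{K}_1(G)$. Write $H=C_{p^{\alpha}}\oplus G$. The first observation is that $p\prec_2\exp(G)$ forces $p\nmid\exp(G)$, so $C_{p^{\alpha}}$ is exactly the $p$-Sylow subgroup of $H$ and $G$ is the complementary Hall subgroup; in particular the two summands have coprime order. This coprimality makes the factors interact cleanly at the level of prime-power-order terms: a term $(a,b)\in H$ of prime power order must have $a=0$ or $b=0$, since otherwise $\mathrm{ord}(a)\mid p^{\alpha}$ and $\mathrm{ord}(b)\mid\exp(G)$ would give an order divisible by two distinct primes. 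Thus every prime-power-order term of $H$ lies in $C_{p^{\alpha}}$ (if its order is a power of $p$) or in $G$ (otherwise). The entire proof rests on combining this splitting with two facts about a dense UFIS: that it has no cross terms, and that it factors compatibly with the decomposition $H=C_{p^{\alpha}}\oplus G$.

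For the upper bound I would start from a dense UFIS $S$ over $H$, so that $\mathsf{k}(S)=\mathsf{K}_1(H)$. The crucial step, and the main obstacle, is to show that under the hypothesis $p\prec_2\exp(G)$ the sequence $S$ has \emph{no cross terms}, i.e.\ every term of $S$ has prime power order. This is precisely the content of the structural machinery developed in Section \ref{sec:densestructure}: density forces a maximal-cross-number UFIS to be short and to consist almost entirely of prime-power-order terms, and the inequality \eqref{eq:2wide} defining $p\prec_2\exp(G)$ in Definition \ref{def:wide} is exactly the numerical input needed to rule out cross terms when the small prime $p$ is peeled off. All of the delicate estimation lives here, in bounding the cross-number contribution that a putative cross term would have to make against what the $2$-wide inequality permits; this is also the step where the $\mathsf{K}_1$ argument genuinely needs the stronger $2$-wide hypothesis rather than the plain wideness used in Theorem \ref{thm:littlecross}.

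Granting that $S$ is cross-term-free, the splitting observation lets me write $S=S_{C_{p^{\alpha}}}\cdot S_{G}$, where $S_{C_{p^{\alpha}}}$ collects the $p$-power-order terms and $S_{G}$ the rest. I then claim that every irreducible subsequence $A\mid S$ lies entirely in one factor. Indeed $A$ inherits the absence of cross terms, so its terms split into a part $A_{C_{p^{\alpha}}}$ lying in $C_{p^{\alpha}}$ and a part $A_{G}$ lying in $G$; since $\overline{\sigma}(A)=0$ in $C_{p^{\alpha}}\oplus G$ forces the two component sums to vanish separately, $A_{C_{p^{\alpha}}}$ is itself zero-sum, and if both $A_{C_{p^{\alpha}}}$ and $A_{G}$ were nontrivial this would be a proper zero-sum subsequence, contradicting irreducibility. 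Conversely, irreducibles over $C_{p^{\alpha}}$ or over $G$ remain irreducible over $H$. Hence the elements of $\mathcal{A}(H)$ dividing $S$ are the disjoint union of those dividing $S_{C_{p^{\alpha}}}$ and those dividing $S_{G}$, and each irreducible factorization of $S$ restricts independently to the two blocks, giving $|\pi^{-1}(S)|=|\pi^{-1}(S_{C_{p^{\alpha}}})|\cdot|\pi^{-1}(S_{G})|$. Thus $S$ is a UFIS if and only if both restrictions are, so $S_{C_{p^{\alpha}}}$ and $S_{G}$ are UFIS's, and since cross number is additive over the factorization $S=S_{C_{p^{\alpha}}}S_{G}$ I conclude $\mathsf{K}_1(H)=\mathsf{k}(S)=\mathsf{k}(S_{C_{p^{\alpha}}})+\mathsf{k}(S_{G})\le \mathsf{K}_1(C_{p^{\alpha}})+\mathsf{K}_1(G)$. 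The matching lower bound comes from running this bijection in reverse: taking dense UFIS's $T_1$ over $C_{p^{\alpha}}$ and $T_2$ over $G$, the product $T_1T_2$ formed inside $H$ is squarefree and, by the same block decomposition, is a UFIS with $\mathsf{k}(T_1T_2)=\mathsf{K}_1(C_{p^{\alpha}})+\mathsf{K}_1(G)$.

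Finally, for the ``in particular'' clause I would note that $\mathsf{K}_1^{*}$ is additive over coprime direct summands, since by definition it is a sum over the cyclic prime-power factors of its argument, and that the cyclic case $\mathsf{K}_1(C_{p^{\alpha}})=\mathsf{K}_1^{*}(C_{p^{\alpha}})$ is a known base case. Combining these with the assumption $\mathsf{K}_1(G)=\mathsf{K}_1^{*}(G)$ and the additive identity just proved yields $\mathsf{K}_1(C_{p^{\alpha}}\oplus G)=\mathsf{K}_1(C_{p^{\alpha}})+\mathsf{K}_1(G)=\mathsf{K}_1^{*}(C_{p^{\alpha}})+\mathsf{K}_1^{*}(G)=\mathsf{K}_1^{*}(C_{p^{\alpha}}\oplus G)$, as claimed. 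I expect the no-cross-term step of Section \ref{sec:densestructure} to be by far the hardest part; once it is in hand, the coprime-block splitting and the resulting multiplicativity of $\pi^{-1}$ are essentially formal.
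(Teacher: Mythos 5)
There is a genuine gap at the step you yourself flag as carrying all of the difficulty: the claim that the machinery of Section \ref{sec:densestructure} shows a dense UFIS $S$ over $C_{p^{\alpha}}\oplus G$ has no cross terms. It does not. The Amalgamation Lemma (Lemma \ref{lem:amalgamation}) gives only an \emph{upper} bound of $p$ on the number of terms of each order $\ell$ divisible by $p$, and Lemma \ref{lem:manyprimestructure} gives only a \emph{lower} bound of $p-1$ on the number of terms of each order $p^{a}$; neither statement, nor their proofs, excludes terms of order $p^{a}m$ with $m>1$, let alone cross terms whose order involves only primes of $\exp(G)$. The theorem is stated for an arbitrary $G$ with $p\prec_{2}\exp(G)$, and eliminating cross terms from dense UFIS's over a general $G$ is essentially as hard as Conjecture \ref{conj:cross} itself. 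Since your block decomposition $S=S_{C_{p^{\alpha}}}S_{G}$ and the multiplicativity of $|\pi^{-1}(\cdot)|$ both require every term of $S$ to lie in one of the two summands, the endgame collapses without this input.

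The paper's proof is arranged precisely so that cross-term-freeness is never needed. It first pins down the $p$-part exactly: combining Lemma \ref{lem:manyprimestructure} with a replacement argument, it shows that $S'=S_{H}$ (with $H=C_{p^{\alpha}}$ the $p$-component) has exactly $p$ terms of each order $p^{a}$, hence $\mathsf{k}(S')\leq\mathsf{K}_{1}(C_{p^{\alpha}})$ and, crucially, $S'$ has full sumset in $H$. The full-sumset property forces $S(S')^{-1}$ to have no nonzero subsums lying in $H$ (otherwise one manufactures two intersecting irreducible subsequences, contradicting Lemma \ref{lem:intersect}), so the image of $S(S')^{-1}$ under the projection $C_{p^{\alpha}}\oplus G\rightarrow G$ is still a subsequence of a UFIS over $G$; since projection can only decrease orders and hence cannot decrease cross number, $\mathsf{k}(S(S')^{-1})\leq\mathsf{K}_{1}(G)$. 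Any cross terms are thus absorbed by the projection rather than excluded. Your lower-bound construction and the ``in particular'' clause are fine, but the upper bound needs this projection argument (or a genuinely new idea) in place of the coprime-block splitting.
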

This theorem, combined with the currently known values of $\mathsf{K}_{1}$
listed in Proposition \ref{thm:previouscross}, proves Conjecture
\ref{conj:cross} for the following cases.
\begin{cor}
\label{cor:main} If n is a $2$-wide positive integer, and $q=P^{+}(n)$,
then $\mathsf{K}_{1}(C_{n})=\mathsf{K}_{1}^{*}(C_{n})$ and $\mathsf{K}_{1}(C_{n}\oplus C_{q})=\mathsf{K}_{1}^{*}(C_{n}\oplus C_{q})$. 
\end{cor}
These are the first cases of Conjecture \ref{conj:cross} known to
be true when $\mbox{exp}(G)$ has arbitrarily many prime factors.
In Section \ref{sec:p-groups} we will strengthen this result to the
case when $q$ is the second largest prime factor of $n$.

In Section \ref{sec:densestructure}, we prove the key structural
lemmas that are necessary to the proper bounding of dense indexed
sequences over the groups under consideration. Using this machinery,
we will prove our main theorems in Section \ref{sec:maintheorem}.
Additionally, in Section \ref{sec:p-groups} we will use a similar
argument to derive an inductive result on $p$-groups, generalizing
a theorem of Kriz \cite{K}. This will also strengthen Corollary \ref{cor:main}
to account for $q$ being the second largest prime factor of $n$.
Finally, in Section \ref{sec:elementarypgroups} we pose two conjectures
about dense UFIS's over elementary $p$-groups $C_{p}^{k}$, which
are a major roadblock to the resolution of Conjecture \ref{conj:cross},
and discuss generalizing zero-sum problems by picking other weighting
functions for indexed sequences instead of the length or cross number.

\section{The Structure of Dense Indexed Sequences\label{sec:densestructure}}

Henceforth, we will decompose a finite abelian group $G$ in the canonical
form 
\begin{equation}
G=\bigoplus_{i=1}^{r}\bigoplus_{j=1}^{k_{i}}C_{p_{i}^{\alpha_{i,j}}},\label{eq:canonicalprimeform}
\end{equation}
where $p_{1},p_{2},\ldots,p_{r}$ are distinct primes and for each
$i\in[1,r]$, we assume $\alpha_{i,1}\geq\alpha_{i,2}\geq\alpha_{i,3}\geq\cdots\geq\alpha_{i,k_{i}}$.
If $n>k_{i}$ then $\alpha_{i,n}$ is taken to be zero.

We generalize a definition from \cite{BCMP}. 
\begin{defn}
By \emph{amalgamating} a subsequence $T$ of a indexed sequence $S$
we mean the operation $S\mapsto ST^{-1}(\overline{\sigma}(T),n)$,
i.e. that of replacing $T$ with its sum. The index $n$ is the smallest
nonnegative integer for which $ST^{-1}(\overline{\sigma}(T),n)$ is
squarefree.
\end{defn}
Amalgamating any subsequence of a UFIS preserves its unique factorization,
and amalgamating any subsequence of a zero-sum free indexed sequence
keeps it zero-sum free. We will show that amalgamation can often be
performed without decreasing $\mathsf{k}(S)$. 

Our key result is the following ``Amalgamation Lemma,'' which has
two parts, one for each type of indexed sequence. Informally, we say
that an indexed sequence $S$ contains $n$ elements of order $\ell$
if
\[
\sum_{\mbox{ord}(g)=\ell}\overline{v}_{g}(S)=n.
\]

\begin{lem}
(Amalgamation Lemma.)\label{lem:amalgamation} Let $G$ be a group
of the form \eqref{eq:canonicalprimeform}, and suppose that $\alpha_{i,1}>\alpha_{i,2}$
for some $i\in[1,r]$. Let $\ell$ be a positive integer divisible
by $p_{i}^{\alpha_{i,2}+1}$. 

If $S$ is a dense zero-sum free indexed sequence over $G$, then
$S$ contains at most $p_{i}-1$ elements of order $\ell$. 

If $S$ is a dense UFIS over $G$, then $S$ contains at most $p_{i}$
elements of order $\ell$.
\end{lem}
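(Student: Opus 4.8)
The plan is to argue by contradiction against density: if $S$ carries more than the allowed number of elements of order $\ell$, I will amalgamate a carefully chosen subsequence of them, producing a zero-sum free sequence (resp.\ UFIS) of the same cross number but strictly smaller length, or else of strictly larger cross number. The argument rests on one structural observation about the order-$\ell$ elements. Since $\alpha_{i,1}>\alpha_{i,2}$, the $p_i$-component of $G$ has a unique cyclic factor of maximal order $p_i^{\alpha_{i,1}}$; let $\phi\colon G\to C_{p_i^{\alpha_{i,1}}}$ be the projection onto it. Write $\beta$ for the exponent of $p_i$ in $\ell$, so $\beta\ge\alpha_{i,2}+1$ by hypothesis (if $\beta>\alpha_{i,1}$ there are no elements of order $\ell$ and the statement is vacuous). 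Every cyclic factor of the $p_i$-component other than the top one has order at most $p_i^{\alpha_{i,2}}<p_i^{\beta}$, so any $g$ of order $\ell$ must satisfy $\mbox{ord}(\phi(g))=p_i^{\beta}$; equivalently $\phi(g)=p_i^{\alpha_{i,1}-\beta}u_g$ with $u_g$ a unit modulo $p_i$. Reducing the leading coefficient modulo $p_i$ attaches to each order-$\ell$ element a nonzero residue $\bar u(g)\in C_{p_i}^{\bullet}$.

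Next I would record the effect of amalgamation. Suppose $A$ is a set of order-$\ell$ terms of $S$ whose residues sum to zero, i.e.\ $\sum_{g\in A}\bar u(g)=0$ in $C_{p_i}$, and set $h=\overline{\sigma}(A)$. Identifying $C_{p_i^{\alpha_{i,1}}}$ with $\mathbb{Z}/p_i^{\alpha_{i,1}}$, we have $\phi(h)=p_i^{\alpha_{i,1}-\beta}\sum_{g\in A}u_g\in p_i^{\alpha_{i,1}-\beta+1}C_{p_i^{\alpha_{i,1}}}$, so $\phi(h)$ has order at most $p_i^{\beta-1}$; the remaining coordinates of the $p_i$-component of $h$ have order at most $p_i^{\alpha_{i,2}}\le p_i^{\beta-1}$, while for each prime $q\neq p_i$ the $q$-component of $h$ has order dividing the $q$-part of $\ell$. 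Hence $\mbox{ord}(h)=\ell'$ divides $\ell/p_i$. If moreover $h\neq 0$, then amalgamating $A$ is legal and replaces the $s=|A|$ terms of $A$, of total cross number $s/\ell$, by one term of cross number $1/\ell'\ge p_i/\ell$. Since the residues are nonzero we have $s\ge 2$, and provided $s\le p_i$ we get $1/\ell'\ge p_i/\ell\ge s/\ell$: the cross number does not decrease while the length drops by $s-1\ge 1$. This contradicts density when $\mathsf{k}$ is unchanged, and contradicts maximality of $\mathsf{k}(G)$ (resp.\ $\mathsf{K}_1(G)$) when it strictly increases.

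For a dense zero-sum free $S$ the argument now closes immediately. Given any $p_i$ elements of order $\ell$, their residues form a sequence of length $\mathsf{D}(C_{p_i})=p_i$ over $C_{p_i}$, which therefore has a nonempty zero-sum subsequence $A$ with $|A|\le p_i$; zero-sum-freeness of $S$ forces $h=\overline{\sigma}(A)\neq 0$, so the amalgamation above applies and gives a contradiction. Hence $S$ has at most $p_i-1$ elements of order $\ell$. For a dense UFIS the same reduction works as soon as we can find a residue-zero subset $A$ of size in $[2,p_i]$ with $\overline{\sigma}(A)\neq 0$; the only obstruction is that $A$ might sum to $0$ in $G$, in which case amalgamation would illegally introduce the term $0$. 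This is exactly why the UFIS bound is larger by one: starting from $p_i+1$ elements of order $\ell$, I would show that either some residue-zero subset of size at most $p_i$ has nonzero sum---and we finish as before---or every residue-zero subset sums to $0$.

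The hard part will be this degenerate UFIS case, and here I expect to use unique factorization itself rather than pure additive combinatorics. First I would establish the \emph{sub-product property}: in a UFIS every zero-sum subsequence is the product of a sub-multiset of the irreducible factors of $S$ (split $S$ through the subsequence, factor both pieces into irreducibles, and invoke uniqueness), and since $S$ is squarefree distinct irreducible factors have disjoint index supports. A minimal residue-zero subset is group-zero by assumption, so by minimality and the sub-product property it must be exactly the support of a single irreducible factor. Among $p_i+1$ order-$\ell$ elements one then finds two distinct such factors $F,F'$ with disjoint supports; a residue-preserving swap of one term of $F$ for one term of $F'$ produces a new residue-zero subset which, still being group-zero in the degenerate case, refactors $FF'$ in a second way and contradicts uniqueness. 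Guaranteeing that such a swap is always available---and handling the subcase where $F$ and $F'$ share no common residue---is the delicate combinatorial point. The worked example $p_i=2$, where three indexed copies of an order-two element already fail to lie in any UFIS, shows that it is precisely unique factorization, and not the additive structure, that rules out the degenerate configuration.
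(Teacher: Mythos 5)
Your zero-sum free half is correct and is essentially the paper's argument in different clothing: your residue map $g\mapsto\bar u(g)$ is exactly the paper's quotient $Q:H_{\ell}\rightarrow H_{\ell}/H_{\ell/p_{i}}\cong C_{p_{i}}$ restricted to order-$\ell$ terms (the hypothesis $p_{i}^{\alpha_{i,2}+1}\mid\ell$ is what makes this quotient cyclic of order $p_{i}$), and the finish via $\mathsf{D}(C_{p_{i}})=p_{i}$, the order computation $\mbox{ord}(\overline{\sigma}(A))\mid\ell/p_{i}$, and the cross-number accounting $1/\ell'\geq p_{i}/\ell\geq s/\ell$ all match. The UFIS half, however, has a genuine gap, and you have correctly located it yourself: the ``degenerate'' branch in which every residue-zero subset of the order-$\ell$ terms is actually zero-sum in $G$. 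Your proposed resolution assumes you can find two \emph{disjoint} minimal residue-zero subsets $F,F'$ and then perform a residue-preserving swap; neither step is justified. With $p_{i}=3$ and four terms all of residue $1$, every minimal residue-zero subset has size $3$ and any two of them intersect, so no disjoint pair exists; and even when disjoint $F,F'$ exist they need not share a residue value, so the swap may be unavailable. As written, the UFIS bound is not proved.

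The paper closes exactly this case with Lemma \ref{lem:intersect}: a squarefree indexed sequence divides a UFIS if and only if every two (equivalently, every two irreducible) zero-sum subsequences have zero-sum gcd. Applied to the residue sequence $T$ over $C_{p_{i}}$, whose length is at least $p_{i}+1>\mathsf{N}_{1}(C_{p_{i}})=p_{i}$, this produces two irreducible zero-sum subsequences $T_{0},T_{0}'$ of $T$ with $\gcd(T_{0},T_{0}')$ not zero-sum; each has length at most $\mathsf{D}(C_{p_{i}})=p_{i}$. Lifting to $S_{0},S_{0}'\mid S_{\ell}$, the gcd $\gcd(S_{0},S_{0}')$ is not zero-sum either (its image under $Q$ is not), so applying Lemma \ref{lem:intersect} to $S$ itself shows that $S_{0}$ and $S_{0}'$ cannot both be zero-sum; whichever has nonzero sum is amalgamated as in your first half. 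Note that your ``sub-product property'' is only the easy direction of this equivalence; what you are missing is the converse direction applied to $T$, which is what forces two \emph{intersecting} minimal residue-zero subsets to exist at all. Once you have them, your own observation that in the degenerate case each would be an irreducible factor of $S$ (and distinct irreducible factors of a UFIS are disjoint) already yields the contradiction, with no swap needed.
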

For the proof of the UFIS case, we will require the following lemma,
which generalizes Lemma 2.2 from \cite{GGW}.
\begin{lem}
\label{lem:intersect}If $G$ is a finite abelian group and $S$ is
an indexed sequence over $G$, then $S$ divides a UFIS if and only
if for any two zero-sum subsequences $U$ and $V$ of $S$, $\gcd(U,V)$
is also zero-sum.\end{lem}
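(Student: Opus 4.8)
The plan is to prove both directions by analyzing the \emph{atoms} of $S$, meaning its minimal zero-sum (irreducible) subsequences, and the organizing principle is that unique factorization, together with the standing squarefreeness of indexed sequences, is equivalent to these atoms being pairwise disjoint (sharing no common indexed element). For the forward direction, suppose $S\mid W$ where $W$ is a UFIS with unique irreducible factorization $W=A_{1}\cdots A_{m}$. Squarefreeness gives $\sum_{i}v_{(g,n)}(A_{i})=v_{(g,n)}(W)\leq1$, so the $A_{i}$ occupy disjoint sets of indexed elements. I would then show every zero-sum $U\mid W$ is a product of a sub-collection of the blocks: since $WU^{-1}$ is again zero-sum, concatenating an irreducible factorization of $U$ with one of $WU^{-1}$ yields a factorization of $W$, which by uniqueness is $\{A_{1},\dots,A_{m}\}$, so $U=\prod_{i\in I}A_{i}$ for some $I\subseteq[1,m]$. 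Then for zero-sum subsequences $U=\prod_{i\in I}A_{i}$ and $V=\prod_{j\in J}A_{j}$ of $S$ (hence of $W$), disjointness gives $\gcd(U,V)=\prod_{i\in I\cap J}A_{i}$, which is zero-sum.

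For the converse, assume the gcd condition and let $A,B$ be atoms of $S$. Then $\gcd(A,B)$ is zero-sum and divides the irreducible $A$, so $\gcd(A,B)\in\{1,A\}$; the possibility $\gcd(A,B)=A$ forces $A\mid B$, hence $A=B$ by irreducibility of $B$. Thus the atoms $A_{1},\dots,A_{m}$ are pairwise disjoint. Writing $R=S(A_{1}\cdots A_{m})^{-1}$ for the remaining elements, I would verify that $R$ is zero-sum free and disjoint from every atom, since any nontrivial zero-sum subsequence of $R$ would contain an atom, which cannot divide $R$. Exactly as before, every zero-sum subsequence of $S$ is then a product of atoms, and in particular disjoint from $R$.

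It remains to exhibit a UFIS divisible by $S$. If $R=1$, then $S=A_{1}\cdots A_{m}$ is already a UFIS: each irreducible factor in any factorization is an atom of $S$, hence some $A_{i}$, and a counting argument (each indexed element lies in a unique atom) forces the factorization to be the multiset $\{A_{1},\dots,A_{m}\}$. If $R\neq1$, set $g_{0}=-\overline{\sigma}(R)$, nonzero since $R$ is zero-sum free, and adjoin it as a fresh indexed element to form $W=Sg_{0}$. A short computation shows $A_{0}:=Rg_{0}$ is irreducible, since any proper zero-sum subsequence of it would produce a nontrivial zero-sum subsequence of $R$; hence $W=A_{0}A_{1}\cdots A_{m}$ is a product of pairwise disjoint atoms with $S\mid W$.

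The main obstacle is verifying that this $W$ is genuinely a UFIS. Given any factorization $W=B_{1}\cdots B_{s}$, I would single out the unique block $B_{1}\ni g_{0}$ and write $B_{1}=Cg_{0}$ with $C\mid S$. Then $WB_{1}^{-1}=SC^{-1}$ is a zero-sum subsequence of $S$, hence a product of atoms $\prod_{i\in I}A_{i}$, which makes $C=R\cdot\prod_{i\notin I}A_{i}$; irreducibility of $B_{1}$ forbids any whole atom inside $C$, forcing $C=R$ and $B_{1}=A_{0}$, and the leftover factorization of $A_{1}\cdots A_{m}$ is then pinned down by the disjoint-atoms counting argument. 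The delicate point is the choice of completion: $g_{0}$ must be a single fresh copy of $-\overline{\sigma}(R)$ completing $R$ into one irreducible block, because naive alternatives such as pairing each element of $R$ with its negative can create spurious coincidences among the adjoined elements and destroy uniqueness.
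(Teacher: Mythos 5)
Your proof is correct and follows essentially the same route as the paper's: in the forward direction overlapping irreducibles force two distinct factorizations, and in the converse you complete $S$ by adjoining a single fresh element of value $-\overline{\sigma}(R)=-\overline{\sigma}(S)$, which is exactly the paper's completion $S(-\overline{\sigma}(S),n)$. The only real difference is bookkeeping: you verify uniqueness of factorization of the completed sequence directly via the pairwise-disjoint atom decomposition, whereas the paper checks that the gcd hypothesis transfers to the completed sequence and reduces to the zero-sum case.
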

\begin{proof}
In one direction, suppose $S$ divides a UFIS $T$ and $S$ has two
zero-sum subsequences $U$ and $V$ for which $\overline{\sigma}(\gcd(U,V))\neq0$.
In particular, there are some irreducible subsequences $U'|U$ and
$V'|V$ which have nontrivial greatest common divisor. Then, factorizations
of $T(U')^{-1}$ and $T(V')^{-1}$ give rise to distinct factorizations
of $T$, so we get 
\[
|\pi^{-1}(T)|\geq|\pi^{-1}(T(U')^{-1})|+|\pi^{-1}(T(V')^{-1})|\geq2,
\]
contradicting the unique factorization of $T$.

In the other direction, suppose that $S$ has no two zero-sum subsequences
with nonzero sum greatest common divisor. Then, we claim that either
$S$ is zero-sum and thus already a UFIS, or else $S(-\overline{\sigma}(S),n)$
is a UFIS, where we choose $n$ so that $S(-\overline{\sigma}(S),n)$
squarefree.

In the first case, if $S$ is already zero-sum then it must have unique
factorization. For, if it had two distinct irreducible factorizations
$S=U_{1}U_{2}\cdots U_{m}=V_{1}V_{2}\cdots V_{n}$, then some $V_{i}$
intersects $U_{1}$ but $\gcd(V_{i},U_{1})$ cannot be zero-sum, contradicting
our assumption.

In the second case, suppose that $T=S(-\overline{\sigma}(S),n)$ does
not have unique factorization. We can reduce to the first case by
showing that no two zero-sum subsequences of $T$ have nonzero sum
greatest common divisor. Suppose otherwise; let two zero-sum subsequences
$U$ and $V$ of $T$ satisfy $\overline{\sigma}(\gcd(U,V))\neq0$.
If $(-\overline{\sigma}(S),n)$ divides $U$, we can replace $U$
by $TU^{-1}$, and assume $(-\overline{\sigma}(S),n)$ does not divide
$U$. Similarly, we assume $(-\overline{\sigma}(S),n)$ does not divide
$V$. Thus we have two zero-sum subsequences of $T(-\overline{\sigma}(S),n)^{-1}=S$
whose greatest common divisor is not zero-sum. This is a contradiction,
so we're done.
\end{proof}
We remark that Lemma \ref{lem:intersect} is still true if we stipulate
that $U$ and $V$ are irreducible, with the same proof. This is convenient
for the following argument.
\begin{proof}
(of Lemma \ref{lem:amalgamation}) We assume $\ell|\exp(G)$, or the
lemma is trivial.

Here is some notation and motivation that we will use in both cases.
Let 
\[
S_{\ell}=\prod_{\mbox{ord}(g)=\ell}g^{v_{g}(S)},
\]
where the product is over all $g\in G^{\bullet}\times\mathbb{N}$
of order $\ell$, and define $H_{k}$ to be the subgroup of $G$ consisting
of all elements with order dividing $k$, for any positive integer
$k|\exp(G)$. Now, $S_{\ell}$ can be thought of as an indexed sequence
over $H_{\ell}$. We show that if the conditions of the lemma are
false then there is some subsequence of $S_{\ell}$ of length at most
$p_{i}$ that can be amalgamated into one nonzero element of $G$
of order dividing $\ell/p_{i}$. Clearly, this amalgamation operation
does not decrease $\mathsf{k}(S)$, but decreases $|S|$, so $S$
must not be dense, which is a contradiction. 

Consider the quotient map $Q:H_{\ell}\rightarrow H_{\ell}/H_{\ell/p_{i}}$,
the codomain being isomorphic to $C_{p_{i}}$ because $p_{i}^{\alpha_{i,2}+1}|\ell$,
and extend it via the unlabelling homomorphism to a monoid homomorphism
$\overline{Q}:\mathcal{F}(H_{\ell}\times\mathbb{N})\rightarrow\mathcal{F}(C_{p_{i}}\times\mathbb{N})$.
Let $T=\overline{Q}(S_{\ell})$.

In the case that $S$ is a dense zero-sum free indexed sequence, suppose
$|T|\geq p_{i}$. Then, it is easy to show that $T$ has a nontrivial
zero-sum subsequence $T_{0}$ with length at most $p_{i}$. Let $S_{0}\in\overline{Q}^{-1}(T_{0})$
be some corresponding subsequence of $S$, with $\overline{\sigma}(S_{0})\neq0$
since $S$ is zero-sum free. 

But $Q(\overline{\sigma}(S_{0}))=0$, so it follows that $\overline{\sigma}(S_{0})\in\ker Q=H_{\ell/p_{i}}$,
and so $\mbox{ord}(\overline{\sigma}(S_{0}))\mid\frac{\ell}{p_{i}}$.
Thus $S_{0}$ can be amalgamated in the fashion described, and we
have a contradiction. This proves that any dense zero-sum free indexed
sequence $S$ must have at most $p_{i}-1$ elements of order $\ell$.

If $S$ is a dense UFIS, suppose $|T|\geq p_{i}+1$. We claim that
$T$ must have two intersecting irreducible subsequences $T_{0}$
and $T_{0}'$. Otherwise, it is a consequence of Lemma \ref{lem:intersect}
that $T$ divides a UFIS. However, $|T|>p_{i}=N_{1}(C_{p_{i}})$,
so this is impossible.

Let $S_{0}\in\overline{Q}^{-1}(T_{0})$ and $S_{0}'\in\overline{Q}^{-1}(T_{0}')$.
Because $K(C_{p_{i}})=p_{i}$, both $|S_{0}|=|T_{0}|\leq p_{i}$ and
$|S_{0}'|=|T_{0}'|\leq p_{i}$ must hold. Because 
\[
Q(\overline{\sigma}(\gcd(S_{0},S_{0}')))=\overline{\sigma}(\gcd(T_{0},T_{0}'))\neq0,
\]
we find that $\gcd(S_{0},S_{0}')$ is not zero-sum. Therefore, if
$\overline{\sigma}(S_{0})=\overline{\sigma}(S_{0}')=0$, then $S$
cannot be a UFIS, by Lemma \ref{lem:intersect}. We may assume without
loss of generality that $\overline{\sigma}(S_{0})\neq0$. Then $S_{0}$
is the subsequence of $S_{\ell}$ we are seeking to amalgamate, and
it satisfies all the desired conditions. It follows by contradiction
that $|S_{\ell}|\leq p_{i}$, as desired.
\end{proof}
As a consequence of Lemma \ref{lem:amalgamation} we have a bound
on the number of cross terms in dense indexed sequences. To eliminate
them altogether, we need a stronger hypothesis, namely that of wide
(or $2$-wide) exponent.
\begin{lem}
\label{lem:manyprimestructure} Let $G$ be of the form \eqref{eq:canonicalprimeform}
with $\alpha_{1,1}>\alpha_{1,2}$, and let $a\in[\alpha_{1,2}+1,\alpha_{1,1}]$$ $. 

If $S$ is a dense zero-sum free indexed sequence over $G$ and 
\[
p_{1}\prec p_{2}^{\alpha_{2,1}}p_{3}^{\alpha_{3,1}}\cdots p_{r}^{\alpha_{r,1}},
\]
then $S$ contains at least $p_{1}-1$ elements of order $p_{1}^{a}$. 

If $S$ is a dense UFIS over $G$ and 
\[
p_{1}\prec_{2}p_{2}^{\alpha_{2,1}}p_{3}^{\alpha_{3,1}}\cdots p_{r}^{\alpha_{r,1}},
\]
then $S$ contains at least $p_{1}-1$ elements of order $p_{1}^{a}$.\end{lem}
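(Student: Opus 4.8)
The plan is to argue by contradiction against the \emph{maximality} of the cross number, by showing that a dense indexed sequence with too few elements of order $p_1^a$ can be enlarged. Suppose $S$ is dense but contains at most $p_1-2$ elements of order $p_1^a$. In the zero-sum free case I would seek a single element $x$ of order exactly $p_1^a$ such that $Sx$ is again zero-sum free; since then $\mathsf{k}(Sx)=\mathsf{k}(S)+\tfrac{1}{p_1^a}>\mathsf{k}(G)$, this contradicts the definition of $\mathsf{k}(G)$ (note that I need only maximality of cross number, not minimality of length). Because appending $x$ preserves zero-sum freeness exactly when $-x\notin\Sigma(S)$, the whole problem reduces to producing an order-$p_1^a$ element lying outside $\Sigma(S)$. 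So it suffices to prove: if every order-$p_1^a$ element of $G$ lies in $\Sigma(S)$, then $S$ already contains at least $p_1-1$ elements of order $p_1^a$.

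To analyze $\Sigma(S)$ I would first pass to the quotient $\bar G=G/p_1^aG$, a $p_1$-group of exponent $p_1^a$ in which the other primes and all higher powers of $p_1$ collapse; an order-$p_1^a$ element of $G$ missing from $\Sigma(S)$ is already detected by its image in $\bar G$, so it is enough to find a top-order element of $\bar G$ outside $\Sigma(\bar S)$. Composing with the reduction $\bar G\to\bar G/p_1\bar G\to C_{p_1}$ onto a single cyclic quotient, the order-$p_1^a$ elements surject onto $C_{p_1}^{\bullet}$; were $\Sigma(S)$ to meet every one of these classes, the image sequence over $C_{p_1}$ would have full sumset, and the Cauchy--Davenport theorem forces at least $p_1-1$ terms of $S$ to survive as nonzero in $C_{p_1}$. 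I would then decompose $\Sigma(S)$ as the Minkowski sum $\Sigma(S_1)+\Sigma(S_2)$, where $S_1$ collects the terms of $p_1$-power order and $S_2$ the remaining terms (pure non-$p_1$ terms together with cross terms), observing that only those subsequences of $S_2$ whose components at the other primes cancel can contribute to the $p_1$-part of a subsum.

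The main obstacle is that the nonzero terms counted by Cauchy--Davenport need not be genuine order-$p_1^a$ elements: terms of higher $p_1$-power order, and more dangerously cross terms, can also survive in $C_{p_1}$ and so help $\Sigma(S)$ cover the order-$p_1^a$ elements. Controlling the cross terms is exactly where the hypothesis $p_1\prec p_2^{\alpha_{2,1}}\cdots p_r^{\alpha_{r,1}}$ enters: the wide inequality \eqref{eq:wide} bounds the budget available to cross terms (the number of ways their non-$p_1$ parts can cancel, governed by the right-hand side of \eqref{eq:wide}) so tightly that they cannot make up the deficit, while the Amalgamation Lemma simultaneously caps the number of terms of each higher $p_1$-order at $p_1-1$; together these force the $p_1-1$ surviving terms to be realized by order-$p_1^a$ elements of $S_1$, giving the contradiction. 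For the dense UFIS case the same scheme applies, except that ``appending $x$ keeps $S$ zero-sum free'' is replaced by ``appending $x$ keeps $S$ a divisor of a UFIS,'' which by Lemma \ref{lem:intersect} means no two zero-sum subsequences of $Sx$ acquire a non-zero-sum greatest common divisor; verifying this consumes slightly more room, which is precisely why the stronger hypothesis $p_1\prec_2 p_2^{\alpha_{2,1}}\cdots p_r^{\alpha_{r,1}}$ and the sharper constant of \eqref{eq:2wide} are required.
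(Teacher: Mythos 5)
There is a genuine gap, and it is located in your very first reduction. You assert that you ``need only maximality of cross number, not minimality of length,'' and accordingly reduce the lemma to the claim: if every order-$p_1^a$ element of $G$ lies in $\Sigma(S)$, then $S$ already contains at least $p_1-1$ elements of order $p_1^a$. That claim is false for sequences that merely attain the maximal cross number. Take $G=C_{p^2}$ (so $r=1$, $a\in\{1,2\}$, and the wideness hypothesis is the empty product) and $S=e^{p^2-1}$ for a generator $e$: this is zero-sum free with $\mathsf{k}(S)=\frac{p^2-1}{p^2}=\mathsf{k}(C_{p^2})$, it contains \emph{no} element of order $p$, and yet $\Sigma(S)=G\setminus\{0\}$ contains every order-$p$ element, so no order-$p$ element can be appended. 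The lemma's conclusion really does require the minimality-of-length half of denseness ($S=e^{p^2-1}$ is not dense precisely because $e^{p-1}(pe)^{p-1}$ is shorter with the same cross number), so any argument that discards it, as yours does, cannot close. The same example defeats the later Cauchy--Davenport step: passing to a $C_{p_1}$ quotient can only certify $p_1-1$ terms of $S$ that are nonzero modulo that quotient, and those terms may all have order $p_1^{a+1}$ or higher (or be cross terms); the Amalgamation Lemma's cap of $p_1-1$ terms per higher order does not prevent this --- indeed $p_1-1$ terms of order $p_1^{a+1}$ alone already give full sumset in $C_{p_1}$ --- and the wideness inequality \eqref{eq:wide} is never brought to bear as a concrete estimate in your sketch. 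You correctly name this as ``the main obstacle,'' but the proposed resolution does not overcome it.

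The paper's proof goes in the opposite direction: it is an \emph{exchange} argument that uses both halves of denseness. Assuming at most $p_1-2$ elements of order $p_1^a$, one removes the entire subsequence $S'$ of terms whose order is divisible by $p_1^a$ and inserts a canonical block $T$ consisting of $p_1-1$ terms of each order $p_1^a,p_1^{a+1},\dots,p_1^{\alpha_{1,1}}$ supported on multiples of a generator of the $C_{p_1^{\alpha_{1,1}}}$ component. Every nonempty subsum of $T$ has order divisible by $p_1^a$ while every subsum of $S(S')^{-1}$ does not, so the new sequence is still zero-sum free (and in the UFIS case, still divides a UFIS by Lemma \ref{lem:intersect}, with a block of $p_1$ terms per order). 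The Amalgamation Lemma bounds $\mathsf{k}(S')$ order by order, and the wideness (resp.\ $2$-wideness) hypothesis is exactly the inequality that forces $\mathsf{k}(T)\geq\mathsf{k}(S')$; strict inequality contradicts maximality of the cross number, while equality yields a strictly shorter sequence of the same cross number, contradicting minimality of length. If you want to salvage your approach, you would have to replace the single-element appending by a comparison of this kind in which the length reduction, not just the cross number, supplies the contradiction.
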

\begin{proof}
We start with the zero-sum free case. Suppose for the sake of contradiction
that $S$ contains at most $p_{1}-2$ elements of order $p_{1}^{a}$.
Remove the subsequence $S'$ of $S$ consisting of all its elements
with order divisible by $p_{1}^{a}$, and replace them by the indexed
sequence $T$ consisting of $p_{1}-1$ elements of each order $p_{1}^{a},p_{1}^{a+1},\ldots,p_{1}^{\alpha_{1,1}}$.
The result is a sequence $S_{1}=S(S')^{-1}T$.

If $e$ is a generator of the component $C_{p_{1}^{\alpha_{1,1}}}$
in $G$, then we define this indexed sequence $T$ as follows. First,
define the sequence
\[
T_{0}=e^{p_{1}-1}[p_{1}e]^{p_{1}-1}\cdots[p_{1}^{\alpha_{1,1}-a}e]^{p_{1}-1}.
\]
Then, choose $T\in\alpha^{-1}(T_{0})$ with $ST$ squarefree. Since
$T$ has no subsequence sums with order not divisible by $p_{1}^{a}$,
$S_{1}$ is still zero-sum free. It remains to show that $\mathsf{k}(S_{1})\geq\mathsf{k}(S)$,
or equivalently that $\mathsf{k}(T)\geq\mathsf{k}(S')$. Note that
by the Amalgamation Lemma, we get an upper bound on $\mathsf{k}(S')$
simply by bounding the number of terms of each order, from which we
can show
\[
\mathsf{k}(T)-\mathsf{k}(S')\geq\sum_{t=a}^{\alpha_{1,1}}\frac{p_{1}-1}{p_{1}^{t}}-\Big(\frac{p_{1}-2}{p_{1}^{a}}+\sum_{1<d|n_{1}}\frac{p_{1}-1}{dp_{1}^{a}}\Big),
\]
where $n_{1}=n/p_{1}^{a}$. Thus it suffices to show that if $n'=n/p_{1}^{\alpha_{1,1}}$,
then
\[
1\geq(p_{1}-1)\Big(\sum_{d|n'}\frac{1}{d}-1\Big).
\]

Now, it is easy to calculate given the factorization $n'=p_{2}^{\alpha_{2,1}}\cdots p_{r}^{\alpha_{r,1}}$
that
\[
\prod_{j=2}^{r}\frac{p_{j}^{\alpha_{j,1}+1}-1}{p_{j}^{\alpha_{j,1}+1}-p_{j}^{\alpha_{j,1}}}=\sum_{d|n'}\frac{1}{d},
\]
so it suffices to have 
\[
\frac{p_{1}}{p_{1}-1}\geq\prod_{j=2}^{r}\frac{p_{j}^{\alpha_{j,1}+1}-1}{p_{j}^{\alpha_{j,1}+1}-p_{j}^{\alpha_{j,1}}}.
\]

This is exactly the wideness condition we assumed to be true. Finally,
it is clear that $|T|<|S'|$ if $\mathsf{k}(T)=\mathsf{k}(S')$, so
since we have $\mathsf{k}(T)\geq\mathsf{k}(S')$, this contradicts
the denseness of $S$. Thus $S$ has exactly $p_{1}-1$ elements of
order $p_{1}^{a}$.

For the second case, that $S$ is a dense UFIS, suppose for the sake
of contradiction that $S$ contains at most $p_{1}-2$ elements of
order $p_{1}^{a}$. As before we replace the subsequence $S'$ of
all elements of $S$ with order divisible by $p_{1}^{a}$ by the indexed
sequence $T'$ consisting of $p_{1}$ elements of each order $p_{1}^{a},p_{1}^{a+1},\ldots,p_{1}^{\alpha_{1,1}}$.
Explicitly, this indexed sequence is chosen so that $S(S')^{-1}T'$
is squarefree and
\[
\alpha(T')=e^{p_{1}-1}[(1-p_{1})e][p_{1}e]^{p_{1}-1}[p_{1}(1-p_{1})e]\cdots[p_{1}^{\alpha_{1,1}-a}e]^{p_{1}}.
\]

After replacement the indexed sequence can still be extended to a
UFIS using Lemma \ref{lem:intersect}, if it is not one already.

Again, we need to show $\mathsf{k}(S(S')^{-1}T')\geq\mathsf{k}(S)$,
i.e. $\mathsf{k}(T')\geq\mathsf{k}(S')$. Transforming as in the zero-sum
free case, except that $T'$ now has at least $2$ more elements of
order $p_{1}^{a}$ than does $S'$, the requirement is exactly the
$2$-wideness condition we assumed. This is a contradiction once more,
so $S$ has at least $p_{1}-1$ elements of order $p_{1}^{a}$.
\end{proof}
Lemma \ref{lem:manyprimestructure} is the key result that makes the
inequalities in the definitions of wideness and $2$-wideness necessary.
Henceforth, we always assume $p_{1}$ is the smallest prime dividing
$\mbox{exp}(G)$. We will mostly consider the case $k_{1}=1$, which
allows us to pick any $a\in[1,\alpha_{1,1}]$ for Lemma \ref{lem:manyprimestructure},
but in Section \ref{sec:p-groups} we will also investigate the general
case.

\section{Proof of Main Theorem\label{sec:maintheorem}}

Combining the structural results of Section \ref{sec:densestructure},
we have enough to prove our main theorems. Let $p$ and $G$ satisfy
the conditions of Theorems \ref{thm:littlecross} and \ref{thm:additive},
so that $p\prec\mbox{exp}(G)$ or $p\prec_{2}\mbox{exp}(G)$, respectively.
Write $p_{1}=p$, $\alpha_{1,1}=\alpha$, and let the prime divisors
of $\mbox{exp}(G)$ be $p_{2},p_{3},\ldots,p_{r}$. Define $G'=C_{p_{1}^{\alpha_{1,1}}}\oplus G$.
\begin{proof}
(of Theorem \ref{thm:littlecross}) Let $S$ be a dense zero-sum free
indexed sequence over $G'$. By the Amalgamation Lemma and Lemma \ref{lem:manyprimestructure},
we find that $S$ has at exactly $p_{1}-1$ terms of each order $p_{1}^{a}$,
where $a\in[1,\alpha_{1,1}]$. Let $H=C_{p_{1}^{\alpha_{1,1}}}$ be
the $p_{1}$-component of $G'$, and let $S'=S_{H}$. Then $\mathsf{k}(S')=\mathsf{k}(H)=\mathsf{k}^{*}(H)$.
It is not difficult to show from here that $S'$ has full sumset in
$H$.

But then, we find that $S(S')^{-1}$ cannot have any subsums lying
in $H\backslash\{0\}$, or else together with some subsequence of
$S'$ one could form a zero-sum subsequence of $S$. Therefore, we
find that even after projecting $G'\rightarrow G$, the image of $S(S')^{-1}$
is still zero-sum free. As a result, $\mathsf{k}(S(S')^{-1})\leq\mathsf{k}(G)$,
since projection cannot decrease cross number. We have
\[
\mathsf{k}(S)\leq\mathsf{k}(H)+\mathsf{k}(G)=\mathsf{k}(C_{p_{1}^{\alpha_{1,1}}})+\mathsf{k}(G),
\]
as desired. It follows that $\mathsf{k}(G')=\mathsf{k}^{*}(G')$ if
$\mathsf{k}(G)=\mathsf{k}^{*}(G)$, since $\mathsf{k}^{*}$ is additive
over direct sums.
\end{proof}
The proof of Theorem \ref{thm:additive} is slightly more subtle,
because Lemma \ref{lem:manyprimestructure} is too weak by itself.
\begin{proof}
(of Theorem \ref{thm:additive}) Let $S$ be a dense UFIS over $G'$.
It follows from Lemma \ref{lem:manyprimestructure} that $S$ contains
at least $p_{1}-1$ terms of each order $p_{1}^{a}$, for $a\in[1,\alpha_{1,1}]$.
Let $H=C_{p_{1}^{\alpha_{1,1}}}$ be the $p_{1}$-component of $G$,
and let $S'=S_{H}$. We have already that $S'$ is a subsequence of
a UFIS over $H$ and contains at least $p_{1}-1$ terms of each order
dividing $p_{1}^{\alpha_{1,1}}$. It is not difficult to show that
$S'$ must have full sumset over $H$ under these conditions.

Now, we show that in fact $S'$ has exactly $p_{1}$ elements of each
of these orders. Suppose this is not the case, and choose $a$ to
be minimal for which $S'$ contains only $p_{1}-1$ terms of order
$p_{1}^{a}$, forming a subsequence $S''|S'$. We may assume $\overline{\sigma}(S'')\neq0$,
since if $\overline{\sigma}(S'')=0$ then we could replace $S''$
in $S'$ with a sequence of $p_{1}$ terms of the same order, increasing
the cross number of $S'$. 

Because $S$ has $p_{1}$ elements of each order lower than $a$,
if $H'=C_{p_{1}^{a}}$ then $S_{H'}$ has full sumset over $H'$.
Because the elements of lower order all sum to zero, there can be
at most one zero-sum free subsequence of $S(S_{H'})^{-1}$ which has
sum in $H'$ and this sum must be of order exactly $p_{1}^{a}$; otherwise,
$S$ would have two distinct intersecting irreducible subsequences.

Now, by removing one element $(b,m)$ from this zero-sum free subsequence
of $S(S_{H'})^{-1}$, if it exists, we are then allowed to insert
$(-\overline{\sigma}(S''),m)$ into $S$, to construct a indexed sequence
$S(b,m)^{-1}(-\sigma(S''),m)$ which is a subsequence of a UFIS. This
$b$ can be picked to have order divisible by $p_{1}^{a}$, since
the sum of the indexed sequence containing it must have order $p_{1}^{a}$.
But then 
\[
\mathsf{k}(S(b,m)^{-1}(-\sigma(S''),m))>\mathsf{k}(S),
\]
so it follows that our assumptions were false and $S$ contains exactly
$p_{1}$ elements of order $p_{1}^{a}$.

We can now follow the same argument as for the proof of \ref{thm:littlecross},
showing that $\mathsf{k}(S)\leq\mathsf{K}_{1}(C_{p_{1}^{\alpha_{1,1}}})+\mathsf{K}_{1}(G)$
and proving the theorem from there.
\end{proof}
As a direct consequence, Theorem \ref{thm:additive} proves Conjecture
\ref{conj:cross} for all groups of the form $C_{p^{\alpha}}\oplus C_{q^{\beta}}$
or $C_{p^{\alpha}}\oplus C_{q^{\beta}}\oplus C_{q}$, with $p<q$,
strengthening Proposition \ref{thm:previouscross} part (2). This
follows from the fact that 
\[
\frac{p^{2}+2p-2}{p^{2}}\geq\frac{q}{q-1}>\frac{q^{\beta+1}-1}{q^{\beta+1}-q^{\beta}}
\]
holds whenever $p<q$, for any $\beta\geq1$.

\section{An Inductive Result Lifting Exponents\label{sec:p-groups}}

Theorem \ref{thm:additive} is an inductive result of the form: given
a group $G$ of a certain structure, if $\mathsf{K}_{1}(G)=\mathsf{K}_{1}^{*}(G)$,
then this is also true for some group containing $G$. In this section
we derive more results of this form: if $G=\bigoplus C_{p_{i}^{\alpha_{i}}}$,
and $\mathsf{K}_{1}(G)=\mathsf{K}_{1}^{*}(G)$, we give conditions
under which we can increase the largest exponent on some prime dividing
$\mbox{exp}(G)$. Specifically, we will be able to show this when
$G$ is a $p$-group, and when $G$ is of the form $C_{p}\oplus C_{p^{\alpha}}\oplus C_{q^{\beta}}$.
For the $p$-group case, we prove the following.
\begin{prop}
\label{prop:toplift} If $p$ is prime, and $\alpha_{1}\geq\max\{\alpha_{2},\alpha_{3},\ldots,\alpha_{r}\}$,
then
\begin{equation}
\mathsf{K}_{1}(C_{p^{\alpha_{1}+1}}\oplus C_{p^{\alpha_{2}}}\oplus\cdots\oplus C_{p^{\alpha_{r}}})=\mathsf{K}_{1}(C_{p^{\alpha_{1}}}\oplus C_{p^{\alpha_{2}}}\oplus\cdots\oplus C_{p^{\alpha_{r}}})+\frac{1}{p^{\alpha_{1}}}.\label{eq:pinduct}
\end{equation}
\end{prop}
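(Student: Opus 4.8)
The plan is to prove the two inequalities whose combination gives the claimed equality. Write $G^{+}=C_{p^{\alpha_{1}+1}}\oplus C_{p^{\alpha_{2}}}\oplus\cdots\oplus C_{p^{\alpha_{r}}}$ and $G=C_{p^{\alpha_{1}}}\oplus C_{p^{\alpha_{2}}}\oplus\cdots\oplus C_{p^{\alpha_{r}}}$, assuming without loss of generality that $\alpha_{1}\geq\alpha_{2}\geq\cdots\geq\alpha_{r}$ (this is exactly the hypothesis that $\alpha_{1}$ is maximal). The guiding observation is that the target increment $\tfrac{1}{p^{\alpha_{1}}}$ is precisely the cross number of $p$ terms of the new top order $p^{\alpha_{1}+1}$, since $p\cdot\tfrac{1}{p^{\alpha_{1}+1}}=\tfrac{1}{p^{\alpha_{1}}}$. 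This drives both directions: the extra cross number available in $G^{+}$ should come entirely from at most $p$ terms of order $p^{\alpha_{1}+1}$, with everything else confined to the subgroup $H_{p^{\alpha_{1}}}$ of elements killed by $p^{\alpha_{1}}$, which is isomorphic to $G$.

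For the lower bound $\mathsf{K}_{1}(G^{+})\geq\mathsf{K}_{1}(G)+\tfrac{1}{p^{\alpha_{1}}}$, let $e$ generate the factor $C_{p^{\alpha_{1}+1}}$ and set $G_{0}=\langle pe\rangle\oplus C_{p^{\alpha_{2}}}\oplus\cdots\oplus C_{p^{\alpha_{r}}}$, a subgroup of $G^{+}$ isomorphic to $G$ under a map preserving the order of every element. I would take a UFIS $S_{0}$ over $G$ with $\mathsf{k}(S_{0})=\mathsf{K}_{1}(G)$, transport it into $G_{0}$, and append the block $e^{p-1}[(1-p)e]$, whose $p$ terms all have order $p^{\alpha_{1}+1}$ and sum to $0$. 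This block is irreducible, and, since $\gcd(1-p,p)=1$, none of its proper nonempty subsequences has a sum lying in $G_{0}$. Consequently every zero-sum subsequence of $S_{0}\cdot e^{p-1}[(1-p)e]$ either avoids the block entirely or contains all of it, so its irreducible factorizations are in bijection with those of $S_{0}$; the product is therefore a UFIS over $G^{+}$ of cross number $\mathsf{K}_{1}(G)+\tfrac{1}{p^{\alpha_{1}}}$.

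For the upper bound, let $S$ be a dense UFIS over $G^{+}$. Because $\alpha_{1}$ is maximal, the top exponent of $G^{+}$ is $\alpha_{1,1}=\alpha_{1}+1$ and strictly exceeds the second-largest exponent $\alpha_{1,2}$ (which is $\alpha_{2}$, or $0$ when $G$ is cyclic), so the Amalgamation Lemma (Lemma \ref{lem:amalgamation}) applies with $\ell=p^{\alpha_{1}+1}$ and shows $S$ has at most $p$ terms of top order; call this subsequence $S_{\mathrm{top}}$, so that $\mathsf{k}(S_{\mathrm{top}})\leq\tfrac{1}{p^{\alpha_{1}}}$. Every remaining term has order dividing $p^{\alpha_{1}}$ and hence lies in $H_{p^{\alpha_{1}}}\cong G$, so $A=S\,S_{\mathrm{top}}^{-1}$ is a sequence over a copy of $G$. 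Since $A$ divides the UFIS $S$, the forward direction of Lemma \ref{lem:intersect} gives that any two zero-sum subsequences of $A$ have zero-sum greatest common divisor; this condition is intrinsic and independent of the ambient group, so the backward direction of Lemma \ref{lem:intersect}, applied inside $H_{p^{\alpha_{1}}}$, shows $A$ divides a UFIS $U$ over $H_{p^{\alpha_{1}}}\cong G$. Monotonicity of cross number under divisibility then yields $\mathsf{k}(A)\leq\mathsf{k}(U)\leq\mathsf{K}_{1}(G)$, whence $\mathsf{k}(S)=\mathsf{k}(S_{\mathrm{top}})+\mathsf{k}(A)\leq\tfrac{1}{p^{\alpha_{1}}}+\mathsf{K}_{1}(G)$.

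The main obstacle is the upper bound, and within it the decisive point is the clean separation just used: removing the top-order terms leaves a sequence supported entirely on the \emph{canonical} subgroup $H_{p^{\alpha_{1}}}\cong G$, so that — unlike in the proof of Theorem \ref{thm:additive} — no projection is performed and hence no full-sumset argument is needed; one only needs that a divisor of a UFIS again divides a UFIS over the appropriate subgroup, which is exactly Lemma \ref{lem:intersect}. The hypothesis $\alpha_{1}=\max$ is indispensable in two places: it guarantees $\alpha_{1,1}>\alpha_{1,2}$ so that Lemma \ref{lem:amalgamation} can bound the top order, and it ensures $H_{p^{\alpha_{1}}}$ is isomorphic to $G$ rather than a proper quotient. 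I would finish by verifying the routine points deferred above, namely that the appended block is irreducible with no interfering subsums and that the transported $S_{0}$ retains unique factorization inside $G^{+}$.
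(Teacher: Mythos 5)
Your proof is correct and follows essentially the same route as the paper's: the lower bound by explicit construction (which the paper dismisses as immediate), and the upper bound by applying the Amalgamation Lemma with $\ell=p^{\alpha_{1}+1}$ to cap the top-order terms at $p$ and observing that the remaining terms form a subsequence of a UFIS over $H_{p^{\alpha_{1}}}\cong G$. You merely supply details the paper leaves implicit, such as invoking Lemma \ref{lem:intersect} to justify that the residual sequence divides a UFIS over the subgroup.
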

\begin{proof}
That the left hand side is at least the right hand side is immediate.
In the other direction, the proof follows by applying Lemma \ref{lem:amalgamation}
with $p_{1}=p$ and $a=\alpha_{1}+1$. Let $G$ be the group on the
left hand side of \eqref{eq:pinduct}, and $H$ be the group on the
right, identified naturally as a subgroup of $G$. We find that a
dense UFIS over $G$ can have at most $p$ elements of order $p^{\alpha_{1}+1}$,
by the Amalgamation Lemma, and the remaining terms form a subsequence
of a UFIS over $H$. The result follows.
\end{proof}
Proposition \ref{prop:toplift} generalizes Theorem 6 part (1) of
Kriz, which states that $\mathsf{K}_{1}(C_{p^{m}}\oplus C_{p}^{n})\leq\mathsf{K}_{1}(C_{p^{m}})+\mathsf{K}_{1}(C_{p}^{n+1})-1$
\cite{K}. We now extend this technique to prove the following generalization
of Theorem \eqref{thm:previouscross} part (2).
\begin{prop}
\label{prop:twoprime}If $G=C_{p}\oplus C_{p^{\alpha}}\oplus C_{q^{\beta}}$
for $p,q$ distinct primes and $\alpha,\beta$ positive integers,
then $\mathsf{K}_{1}(G)=\mathsf{K}_{1}^{*}(G)$.\end{prop}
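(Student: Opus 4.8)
The lower bound $\mathsf{K}_{1}(G)\ge\mathsf{K}_{1}^{*}(G)$ is known by construction, so I only need the reverse inequality, and since $\mathsf{K}_{1}^{*}$ is additive over the coprime decomposition $G=(C_{p}\oplus C_{p^{\alpha}})\oplus C_{q^{\beta}}$ it suffices to prove $\mathsf{K}_{1}(G)\le\mathsf{K}_{1}^{*}(C_{p}\oplus C_{p^{\alpha}})+\mathsf{K}_{1}^{*}(C_{q^{\beta}})$. I would split on the order of the two primes. When $q<p$ the argument is short: a direct computation shows $q\prec_{2}p^{\alpha}$ for every pair of distinct primes $q<p$ and every $\alpha$ (the condition, at the supremum $p/(p-1)$ over $\alpha$, reduces to $2(q-1)(p-1)\ge q^{2}$, which holds because $p\ge q+1$ forces $p-1\ge q$), so Theorem~\ref{thm:additive} applies with the smaller prime $q$ and gives $\mathsf{K}_{1}(G)=\mathsf{K}_{1}(C_{q^{\beta}})+\mathsf{K}_{1}(C_{p}\oplus C_{p^{\alpha}})$. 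Here $\mathsf{K}_{1}(C_{q^{\beta}})=\mathsf{K}_{1}^{*}(C_{q^{\beta}})$ is the cyclic prime-power case (Corollary~\ref{cor:main}, a prime power being vacuously $2$-wide) and $\mathsf{K}_{1}(C_{p}\oplus C_{p^{\alpha}})=\mathsf{K}_{1}^{*}$ is Proposition~\ref{thm:previouscross}(1), so this case is done.

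The substantive case is $p<q$, where neither prime can be peeled off by Theorem~\ref{thm:additive}: the $p$-component has rank two, and $q$ is too large to be $2$-wide with respect to $p^{\alpha}$. Here the relevant inequality runs the other way — one checks that $p\prec_{2}q^{\beta}$ holds automatically whenever $p<q$ (it reduces to $2(p-1)(q-1)\ge p^{2}$, true because $q-1\ge p$) — so the structural machinery of Section~\ref{sec:densestructure} is available with smallest prime $p_{1}=p$. I would then run an induction on $\alpha$ to lift the top $p$-exponent, in the spirit of Proposition~\ref{prop:toplift}. The base case $\alpha=1$ is $G=C_{p}^{2}\oplus C_{q^{\beta}}$, which matches Proposition~\ref{thm:previouscross}(2) ($C_{q^{\beta}}\oplus C_{p}^{2}$ read as $C_{p^{m}}\oplus C_{q}^{2}$ with the roles of the two primes exchanged). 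For the inductive step ($\alpha\ge2$) I would prove
\[
\mathsf{K}_{1}(C_{p}\oplus C_{p^{\alpha}}\oplus C_{q^{\beta}})\le\mathsf{K}_{1}(C_{p}\oplus C_{p^{\alpha-1}}\oplus C_{q^{\beta}})+\frac{1}{p^{\alpha-1}},
\]
and the increments $p^{-(\alpha-1)}$ telescope precisely against the difference of the $\mathsf{K}_{1}^{*}$ values, yielding the claim.

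To prove the lifting step I would take a dense UFIS $S$ over $G=C_{p}\oplus C_{p^{\alpha}}\oplus C_{q^{\beta}}$ and let $S'$ be the subsequence of all terms whose order is divisible by $p^{\alpha}$, i.e.\ of order $p^{\alpha}q^{j}$ for some $0\le j\le\beta$. Every remaining term has order dividing $p^{\alpha-1}q^{\beta}$, so its $C_{p^{\alpha}}$-coordinate is a multiple of $p$ and it lies in the index-$p$ subgroup $H'=C_{p}\oplus C_{p^{\alpha-1}}\oplus C_{q^{\beta}}$; by Lemma~\ref{lem:intersect} the restriction $S(S')^{-1}$ is a subsequence of a UFIS over $H'$, so its cross number is at most $\mathsf{K}_{1}(H')$. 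It therefore suffices to show $\mathsf{k}(S')\le p^{-(\alpha-1)}$. The Amalgamation Lemma (Lemma~\ref{lem:amalgamation}, applicable since $\alpha\ge2$ forces $p^{2}\mid p^{\alpha}q^{j}$) bounds the number of terms of each order $p^{\alpha}q^{j}$ by $p$; the pure terms ($j=0$) already contribute up to $p/p^{\alpha}=p^{-(\alpha-1)}$, so the entire content of the estimate is controlling the cross terms $j\ge1$.

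This cross-term control is the crux and the place I expect the real difficulty. The plan is to argue as in Lemma~\ref{lem:manyprimestructure}: if $S'$ carried too much weight on cross terms, I would replace the block $S'$ by a block of pure $p$-power terms of order $p^{\alpha}$ — rerouting the $q$-parts through the pure $q$-power part of $S$, which I would first show has full sumset in $C_{q^{\beta}}$ — producing a shorter UFIS of no smaller cross number and contradicting the denseness of $S$. The inequality that makes this replacement non-decreasing in cross number is exactly $p\prec_{2}q^{\beta}$, which is why $2$-wideness enters. The delicate points, and the main obstacles, are twofold: first, Lemma~\ref{lem:manyprimestructure} controls only the orders $p^{2},\dots,p^{\alpha}$ and says nothing about the order-$p$ terms coming from the extra $C_{p}$ factor, so the interaction of the cross terms with this low factor must be handled separately; and second, the replacement must be verified to preserve extendability to a UFIS via Lemma~\ref{lem:intersect}. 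Establishing the full-sumset input for the $q$-part and pushing the $2$-wide replacement through in the presence of the rank-two $p$-component is where essentially all the work lies.
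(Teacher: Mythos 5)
Your easy case is exactly the paper's: when $q\prec_{2}p^{\alpha}$ (in particular whenever $q<p$), Theorem \ref{thm:additive} peels off $C_{q^{\beta}}$, and your verification of the $2$-wideness inequalities on both sides of the split is correct. The problem is the main case $p<q$, where your route genuinely diverges from the paper's and the step you yourself flag as the crux does not close. Your lifting inequality needs $\mathsf{k}(S')\leq p^{-(\alpha-1)}$ for $S'$ the subsequence of all terms of order divisible by $p^{\alpha}$, i.e.\ $s_{0}+\sum_{j\geq1}s_{j}q^{-j}\leq p$, where $s_{j}$ counts terms of order $p^{\alpha}q^{j}$. The Amalgamation Lemma gives only $s_{j}\leq p$ for each $j$ separately, so the bound fails as soon as $s_{0}=p$ and some $s_{j}>0$; and in exactly that configuration your proposed replacement of the block $S'$ by $p$ pure terms of order $p^{\alpha}$ \emph{strictly decreases} the cross number, so it yields no contradiction with denseness. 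The density-plus-replacement mechanism of Lemma \ref{lem:manyprimestructure} only ever produces lower bounds on the pure terms ($s_{0}\geq p-1$); nothing in the paper's toolkit eliminates cross terms outright, and your argument needs them eliminated. Nor can you shift them to the other side of your decomposition, since a term of order $p^{\alpha}q^{j}$ does not lie in the index-$p$ subgroup $H'$.

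The paper's bookkeeping avoids this by charging the cross terms to the $q$-budget instead of the $p$-budget. It pins down the pure $p$-part $S_{H}$, $H=C_{p}\oplus C_{p^{\alpha}}$, exactly: Lemma \ref{lem:manyprimestructure} gives at least $p-1$ terms of each order $p^{a}$ with $a\geq2$; a separate replacement estimate (the $\delta$ computation) handles the order-$p$ terms, which that lemma cannot reach because $\alpha_{1,2}=1$, giving at least $2p-1$ of them; and the closing argument of Theorem \ref{thm:additive} upgrades these to exactly $2p$ and exactly $p$. Then $S_{H}$ has full sumset in $H$ with cross number $\mathsf{K}_{1}^{*}(H)$, so $S(S_{H})^{-1}$ --- cross terms included --- projects onto $C_{q^{\beta}}$ without decreasing cross number and lands in a UFIS there, bounded by $\mathsf{K}_{1}(C_{q^{\beta}})$. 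To salvage your induction on $\alpha$ you would have to either prove that a dense UFIS here has no terms of order $p^{\alpha}q^{j}$ (which these tools do not give) or switch to the projection accounting, at which point you have reconstructed the paper's proof. Your base case $\alpha=1$ via Proposition \ref{thm:previouscross} part (2) is fine, but it does not repair the inductive step.
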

\begin{proof}
Theorem \ref{thm:additive} is enough when $q$ is $2$-wide with
respect to $\{p\}$, i.e. when 
\begin{equation}
\frac{q^{2}+2q-2}{q^{2}}\geq\frac{p}{p-1},\label{eq:twoprimeswide}
\end{equation}
so we may assume the opposite.

Now, let $S$ be a dense UFIS over $G$, and let $S$ have $t_{a}$
elements of order $p^{a}$, for each $a\in[1,\alpha]$. Since $p$
is $2$-wide with respect to $\{q\}$, by directly applying Lemma
\ref{lem:manyprimestructure}, we find $t_{a}\geq p-1$ for each $a\geq2$.
For $a=1$ we make the following special argument.

If $t_{1}\leq2p-2$, then we can replace the subsequence of $S$ consisting
of all elements with order divisible by $p$ with the canonical example
consisting of $2p$ elements of order $p$ and $p$ of each order
$p^{a}$ with $a>1$. If the change in $\mathsf{k}(S)$ is $\delta$,
this $\delta$ is bounded below by an increase of $2/p$ plus a decrease
of at most the total cross number of the cross terms of $S$. Of these,
there are at most $q$ terms of each order $pq^{b}$ and at most $p$
of each order $p^{1+a}q^{b}$, where $a,b>0$. Thus 
\begin{equation}
\delta>\frac{2}{p}-q\sum_{b=1}^{\beta}\frac{1}{pq^{b}}-p\sum_{a,b>0}\frac{1}{p^{1+a}q^{b}}=\frac{2}{p}-\frac{q}{p(q-1)}-\frac{1}{(p-1)(q-1)}.\label{eq:deltatwoprime}
\end{equation}

It remains to prove $\delta>0$. The right side of \eqref{eq:deltatwoprime}
is always nonnegative unless $q=2$, which is impossible because we
assumed $q$ is not $2$-wide with respect to $\{p\}$. Thus, we have
shown that there are at least $2p-1$ elements of $S$ of order $p$,
and the last argument of the proof of Theorem \ref{thm:additive}
proves that $S$ must then have exactly $2p$ elements of this order.
Similarly, we find that $S$ must have exactly $p$ elements of each
order $p^{a}$ with $a>1$, so we can finish in the same way as in
the proof of Theorem \ref{thm:additive}.
\end{proof}
As a corollary, we can strengthen Corollary \ref{cor:main} by combining
this result with Theorem \ref{thm:additive}.
\begin{cor}
If $n$ is a $2$-wide positive integer, and $q$ is its largest or
second largest prime factor, then $\mathsf{K}_{1}(C_{n})=\mathsf{K}_{1}^{*}(C_{n})$
and $\mathsf{K}_{1}(C_{n}\oplus C_{q})=\mathsf{K}_{1}^{*}(C_{n}\oplus C_{q})$.
\end{cor}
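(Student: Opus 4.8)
The plan is to prove the corollary by combining the two inductive results already established, namely Theorem~\ref{thm:additive} and Proposition~\ref{prop:twoprime}, applied on top of the base cases in Proposition~\ref{thm:previouscross}. Write $n = q_1^{\beta_1} q_2^{\beta_2} \cdots q_s^{\beta_s}$ with $q_1 < q_2 < \cdots < q_s$, so that $q_s = P^+(n)$. The case where $q$ is the largest prime factor is exactly Corollary~\ref{cor:main}, so I would state that case as already handled and concentrate on the new case where $q = q_{s-1}$, the second largest prime factor.

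First I would set up the induction that peels off the small primes. By the definition of $2$-wideness, for each $i \in [1, s-1]$ we have $q_i \prec_2 q_{i+1}^{\beta_{i+1}} \cdots q_s^{\beta_s}$, which is precisely the hypothesis needed to apply Theorem~\ref{thm:additive} with $p = q_i$ and $G$ the group built from the larger primes. So starting from a base group involving only the two largest primes $q_{s-1}$ and $q_s$, I would repeatedly apply Theorem~\ref{thm:additive} to adjoin the cyclic factors $C_{q_{s-2}^{\beta_{s-2}}}, \ldots, C_{q_1^{\beta_1}}$ one prime at a time, in decreasing order of prime size. Each application preserves the equality $\mathsf{K}_1 = \mathsf{K}_1^*$ because $\mathsf{K}_1^*$ is additive over direct sums, so it suffices to verify the equality for the base group and to check that the $2$-wideness hypothesis licenses each step.

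The crux is therefore the base case. For $C_n$ with $q = q_{s-1}$ the base group is $C_{q_{s-1}^{\beta_{s-1}}} \oplus C_{q_s^{\beta_s}}$, and for $C_n \oplus C_q$ it is $C_{q_{s-1}} \oplus C_{q_{s-1}^{\beta_{s-1}}} \oplus C_{q_s^{\beta_s}}$. The first of these is handled by Proposition~\ref{thm:previouscross} part (2) (or by the remark following the proof of Theorem~\ref{thm:additive}, which establishes Conjecture~\ref{conj:cross} for groups $C_{p^\alpha} \oplus C_{q^\beta}$ with $p < q$). The second, a rank-three group of the form $C_p \oplus C_{p^\alpha} \oplus C_{q^\beta}$ with the two equal-prime factors, is exactly what Proposition~\ref{prop:twoprime} was proved to cover. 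In both base cases I would note that the relevant $2$-wideness inequality holds automatically because $q_{s-1} < q_s$, exactly as in the display following the proof of Theorem~\ref{thm:additive}.

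The main obstacle, and the only place requiring care, is bookkeeping the order in which primes are adjoined and confirming that the $2$-wideness condition $q_i \prec_2 \prod_{j>i} q_j^{\beta_j}$ supplied by the hypothesis that $n$ is $2$-wide is the exact condition consumed at each inductive step. In particular, for the $C_n \oplus C_q$ statement I must be sure that after fixing the base group $C_{q_{s-1}} \oplus C_{q_{s-1}^{\beta_{s-1}}} \oplus C_{q_s^{\beta_s}}$, the smaller primes $q_1, \ldots, q_{s-2}$ are still each $2$-wide with respect to the product of all larger prime powers present, which they are since adjoining the extra $C_{q_{s-1}}$ factor does not enlarge the product $\prod_{j>i} q_j^{\beta_j}$ that appears in the $2$-wideness test (the test depends only on the largest exponent of each prime). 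Once this alignment is checked, the result follows by a clean induction with no further computation.
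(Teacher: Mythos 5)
Your proposal is correct and matches the paper's intended argument: the paper proves this corollary in one line by combining Proposition~\ref{prop:twoprime} (as the base case supplying the second-largest prime together with the extra $C_q$ factor) with repeated applications of Theorem~\ref{thm:additive} to adjoin the smaller prime-power components, exactly the induction you describe. Your bookkeeping of the $2$-wideness conditions and the observation that the extra $C_{q_{s-1}}$ factor does not change $\exp(G)$ is precisely the verification the paper leaves implicit.
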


\section{Concluding Remarks\label{sec:elementarypgroups}}

The study of unique factorization over elementary $p$-groups $C_{p}^{n}$
is particularly important to Conjecture \ref{conj:cross} because
every nonzero element of $C_{p}^{n}$ has order $p$, so we have $\mathsf{k}(S)=\frac{1}{p}|S|$
for all $p$. This problem is also interesting because the study of
zero-sum sequences over $p$-groups lends itself to algebraic techniques
from representation theory, see for example Chapter 5.5 of \cite{BC}
or \cite{O,P1,P2}, giving strong results that are not available in
the general case.

Study of the Narkiewicz constant $\mathsf{N}_{1}(G)$ is not limited
by the same obstacles as study of $\mathsf{K}_{1}(G)$ in the case
that $\mbox{exp}(G)$ has many prime factors; the $\mathsf{N}_{1}$
conjecture has been shown for all groups of rank at most $2$ \cite{GGW,GLP,GPZ}. 

However, in the rank direction $\mathsf{N}_{1}(G)$ and $\mathsf{K}_{1}(G)$
have proved equally difficult to compute. Moreover, it suffices to
prove the following statement to resolve the elementary $p$-group
case in both the $\mathsf{N}_{1}$ and $\mathsf{K}_{1}$ conjectures.
\begin{conjecture}
\label{conj:elementaryp}If $G=C_{p}^{n}$, then $\mathsf{N}_{1}(G)=np$.
\end{conjecture}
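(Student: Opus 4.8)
The plan is first to translate the statement into the combinatorial language of the preceding sections and then to attack the upper bound, which is the entire content. Since every nonzero element of $C_p^n$ has order $p$, we have $\mathsf{k}(S)=\frac1p|S|$ for every indexed sequence, so $\mathsf{N}_{1}(C_p^n)=p\,\mathsf{K}_{1}(C_p^n)$ and $\mathsf{K}_{1}^{*}(C_p^n)=n$; thus Conjecture~\ref{conj:elementaryp} is exactly the additivity statement $\mathsf{K}_{1}(C_p\oplus C_p^{n-1})=\mathsf{K}_{1}(C_p)+\mathsf{K}_{1}(C_p^{n-1})$. This is the conclusion of Theorem~\ref{thm:additive} in precisely the regime the theorem cannot reach, since $p\prec_{2}p^{\,n-1}$ never holds (it already fails the requirement $p\nmid n$); likewise the Amalgamation Lemma is vacuous here because every $\alpha_{i,j}=1$, so that $\alpha_{i,1}>\alpha_{i,2}$ is never available. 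The lower bound $\mathsf{N}_{1}(C_p^n)\ge np$ is immediate from the indexed sequence $\prod_{i=1}^{n}e_i^{\,p}$, whose only minimal zero-sum subsequences are the $n$ blocks $e_i^{\,p}$, making it a UFIS of length $np$. Everything therefore reduces to proving $\mathsf{N}_{1}(C_p^n)\le np$.

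\textbf{Reformulation and approach.} By Lemma~\ref{lem:intersect}, in the irreducible form noted in the remark following it, a zero-sum indexed sequence $S$ is a UFIS if and only if its minimal zero-sum subsequences are pairwise disjoint; equivalently, the factorization $S=U_{1}\cdots U_{m}$ into irreducible blocks is unique and no ``crossing'' irreducible subsequence meets two blocks. The plan is to bound $|S|=\sum_{i}|U_{i}|$ for such $S$ by induction on $n$. Viewing the elements of $S$ as vectors in $\mathbb{F}_p^{\,n}$, each block satisfies $|U_{i}|\le (p-1)\dim\langle U_{i}\rangle+1\le p\dim\langle U_{i}\rangle$ by Olson's value $\mathsf{D}(C_p^n)=n(p-1)+1$, so it would suffice to extract, after projecting along a well-chosen direction, a single block of length at most $p$ whose removal leaves a UFIS over a complementary $C_p^{\,n-1}$, and then apply the inductive hypothesis. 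An alternative is the group-algebra method: encode $S$ as $\prod_{g}(X^{g}-1)\in\mathbb{F}_p[C_p^n]$ and attempt to show that $|S|>np$ forces the coefficient pattern corresponding to a crossing minimal zero-sum subsequence, contradicting unique factorization.

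\textbf{Main obstacle.} The condition ``no crossing minimal zero-sum subsequence'' is global and extremely fragile, which is exactly why this is the roadblock. A projection $C_p^n\to C_p^{\,n-1}$ identifies distinct elements and almost always manufactures new crossing zero-sum subsequences, so one cannot simply push a UFIS forward along it; the direction must be chosen so that the collapsed blocks still partition uniquely, and it is unclear that such a direction always exists. Symmetrically, the polynomial method computes only the signed count of zero-sum subsequences (the coefficient of $X^{0}$ in $\prod_{g}(X^{g}-1)$) and is blind to whether the factorization into blocks is unique, so it controls $\mathsf{D}(C_p^n)$ but not $\mathsf{N}_{1}$. I expect essentially all the difficulty to concentrate here: what is missing is an invariant—perhaps the representation-theoretic one alluded to in the cited work on $p$-groups—that tracks how irreducible blocks may share directions, furnishing a substitute for Lemma~\ref{lem:manyprimestructure} valid when the primes coincide. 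Producing such a tool is the crux on which Conjecture~\ref{conj:elementaryp} turns.
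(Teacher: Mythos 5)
This statement is labeled a \emph{conjecture} in the paper, and the paper offers no proof of it: it records known special cases due to Gao ($C_2^n$, $C_3^n$, $C_p^2$, and a few small groups) and then reduces the general case to Conjecture \ref{shortconjecture} (every zero-sum free indexed sequence over $C_p^k$ has a subsequence of length less than $p$ whose sum is not attained by any other subsequence), using Gao's inequality $\prod_i |U_i|\le p^n$ together with concavity of the logarithm to conclude $|S|\le np$ once all irreducible blocks are short. Your proposal likewise does not prove the statement, and to your credit it says so explicitly. Your preliminary reductions are all correct: $\mathsf{N}_1(C_p^n)=p\,\mathsf{K}_1(C_p^n)$, the target is $\mathsf{K}_1^*(C_p^n)=n$, Theorem \ref{thm:additive} and the Amalgamation Lemma are both vacuous here (the $2$-wideness condition requires $p\nmid\exp(G)$ and the lemma requires $\alpha_{i,1}>\alpha_{i,2}$), and the lower bound via $\prod_{i=1}^n e_i^{\,p}$ is sound.

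The genuine gap is that the entire upper bound $\mathsf{N}_1(C_p^n)\le np$ is missing, and neither of your two sketched strategies closes it. The induction-on-rank route founders exactly where you say it does: a projection $C_p^n\to C_p^{n-1}$ generically creates crossing minimal zero-sum subsequences, so UFIS's do not push forward. Moreover the per-block estimate $|U_i|\le(p-1)\dim\langle U_i\rangle+1$ does not sum to $np$ because the spans $\langle U_i\rangle$ may overlap heavily; the paper instead relies on the multiplicative bound $\prod_i|U_i|\le p^n$, which controls overlapping spans but only yields $|S|\le np$ after one knows every block has length at most $p$ --- and ruling out long blocks is precisely the content of Conjecture \ref{shortconjecture}, which remains open. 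The group-algebra method is, as you note, insensitive to uniqueness of factorization. So your proposal is an accurate diagnosis of why the problem is hard, but it is not a proof, and the paper does not contain one either.
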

Conjecture \ref{conj:elementaryp} has been shown for the following
cases.
\begin{prop}
\cite{G} If $G$ is one of the following groups then $\mathsf{N}_{1}(G)=np$.\end{prop}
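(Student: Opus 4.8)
The plan is to prove the two inequalities $\mathsf{N}_1(C_p^n)\ge np$ and $\mathsf{N}_1(C_p^n)\le np$ separately for each of the listed groups. As is typical, the lower bound is a single uniform construction and essentially all of the work is in the matching upper bound.

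For the lower bound, I would fix a basis $e_1,\dots,e_n$ of $C_p^n$ and take $S_0=\prod_{i=1}^n\prod_{k=0}^{p-1}(e_i,k)$, so that $\alpha(S_0)=\prod_{i=1}^n e_i^{\,p}$ and $S_0$ is squarefree of length $np$. Each block $U_i=\prod_{k=0}^{p-1}(e_i,k)$ is a minimal zero-sum indexed sequence, so $S_0=U_1\cdots U_n$ is an irreducible factorization. Since the $e_i$ are independent, any zero-sum subsequence of $S_0$ unlabels to $\prod_i e_i^{a_i}$ with $p\mid a_i$ for every $i$, forcing $a_i\in\{0,p\}$; hence the $U_i$ are the only irreducible subsequences and the factorization is unique. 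Thus $S_0$ is a UFIS and $\mathsf{N}_1(C_p^n)\ge np$.

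For the upper bound I would first extract the rigid structure forced by unique factorization. Let $S$ be any UFIS over $C_p^n$ with unique factorization $S=U_1\cdots U_m$. Applying the irreducible version of Lemma \ref{lem:intersect} to $S$ itself, any two irreducible subsequences $U,V\mid S$ have $\gcd(U,V)$ zero-sum; but an irreducible sequence has no nontrivial proper zero-sum subsequence, so $\gcd(U,V)\in\{1,U\}\cap\{1,V\}$ and $U,V$ are either disjoint or equal. Since an irreducible $V\mid S$ cannot be disjoint from every block, every irreducible subsequence of $S$ equals some $U_i$; in particular \emph{no} minimal zero-sum subsequence straddles two blocks. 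Because the blocks are disjoint, $|S|=\sum_i|U_i|$, so the task reduces to showing that no such crossing-free family of pairwise disjoint minimal zero-sum sequences in $C_p^n$ has total length exceeding $np$, the length of the split family $\prod_i e_i^{\,p}$.

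This reduction is then discharged case by case using whatever structural control is available for the listed groups. For rank at most two the complete classification of minimal zero-sum sequences over $C_p$ and $C_p^2$ pins down how disjoint blocks can coexist, and one checks directly that crossing-freeness caps the total length at $np$. For $p=2$ every minimal zero-sum sequence is either a repeated pair $g^2$ or an $\mathbb{F}_2$-circuit, so $S$ is governed by the matroid of linear dependencies of its support and a rank count over $\mathbb{F}_2$ bounds $\sum_i|U_i|$; the same philosophy, through the group-algebra and polynomial methods that are available only for small primes, handles the remaining listed cases. The hard part, and precisely the reason Conjecture \ref{conj:elementaryp} is open in general, is this last step: for large $p$ and large $n$ minimal zero-sum sequences over $C_p^n$ can be as long as $\mathsf{D}(C_p^n)=n(p-1)+1$ and occur in vast profusion, so there is no classification of which disjoint families avoid a crossing irreducible, and the crude estimate $\sum_i|U_i|\le m\cdot\mathsf{D}(C_p^n)$ is far too weak to deliver the sharp bound $np$.
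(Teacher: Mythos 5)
You should first be aware that the paper offers no proof of this proposition: it is quoted from Gao \cite{G}, with only the remark that case 3, though not stated there explicitly, follows by the same methods. What the paper does record, immediately after the statement, are the two ingredients of Gao's argument: that at least $|S|-n(p-1)$ of the blocks $U_i$ have odd length, and the inequality $\prod_{i=1}^{t}|U_i|\leq|G|=p^{n}$ of \eqref{eq:6}, both from Proposition 1 of \cite{G}. Measured against that, your lower-bound construction is correct, and your structural reduction is also correct and worth keeping: by the irreducible form of Lemma \ref{lem:intersect}, any two irreducible subsequences of a UFIS have zero-sum gcd, hence (by minimality) are disjoint or equal, so the irreducible subsequences of $S$ are exactly the blocks $U_1,\dots,U_t$, no minimal zero-sum straddles two blocks, and $|S|=\sum_i|U_i|$. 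This is exactly the framework within which the paper discusses \cite{G} in Section \ref{sec:elementarypgroups}.

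The genuine gap is that the upper bound --- which is the entire content of the proposition --- is never actually proved for any of the listed groups except, arguably, $C_2^n$. After the reduction you list families of techniques (``complete classification,'' ``matroid of linear dependencies,'' ``group-algebra and polynomial methods'') but derive $\sum_i|U_i|\le np$ from none of them. The missing engine is precisely \eqref{eq:6}: because no irreducible subsequence crosses two blocks, the subsum sets $\Sigma(U_i)$ interact so rigidly that $\prod_i|U_i|\le p^n$, and it is this inequality, combined with the length bound $|U_i|\le p$ when it holds and with the odd-length count when it does not, that delivers $\sum_i|U_i|\le np$; your write-up contains no substitute for it. Concretely: for $C_p^2$ you appeal to a classification of minimal zero-sum sequences over $C_p^2$ that is both far deeper than and historically later than Gao's 1997 proof that $\mathsf{N}_1(C_p^2)=2p$, so the argument as described is closer to circular than to elementary; for $C_3^n$ you must rule out, e.g., large disjoint families of length-$2$ blocks $g\cdot(2g)$ together with longer blocks, which ``crossing-freeness'' alone does not visibly do without the counting inequality; and for the sporadic cases $C_5^3$, $C_5^4$, $C_7^3$ nothing is offered at all. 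In short: the setup is right and matches the paper's framing, but the proof of the stated equality is absent and must be imported from \cite{G}.
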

\begin{enumerate}
\item $G=C_{2}^{n},C_{3}^{n}$.
\item $G=C_{p}^{2}$.
\item $G=C_{5}^{3},C_{5}^{4},C_{7}^{3}$.
\end{enumerate}
Note that although case 3 was not specifically mentioned by Gao \cite{G},
the same methods in that paper apply. Now we transform Conjecture
\ref{conj:elementaryp} into a problem about finding certain subsequences
of zero-sum free indexed sequences over $G$. Again, we begin with
some structural results about maximal-length UFIS's over $C_{p}^{n}.$ 

Henceforth, let $S$ be a UFIS over $G=C_{p}^{n}$ that factors into
irreducibles as $S=U_{1}U_{2}\cdots U_{t}$. By Proposition 1 of \cite{G},
at least $ $$|S|-n(p-1)$ of the $U_{i}$ have odd length, and furthermore
\begin{equation}
\prod_{i=1}^{t}|U_{i}|\leq|G|=p^{n}.\label{eq:6}
\end{equation}

We say a $U_{i}$ is \emph{optimal} in $S$ if it cannot be replaced
by any longer irreducible indexed sequence while preserving the unique
factorization of $S$. Notice that the only property necessary to
satisfy is the following.
\begin{prop}
\cite{G} If $U_{1}$ and $S'$ are an irreducible indexed sequence
and a UFIS, respectively, then $U_{1}S'$ is a UFIS if and only if
$\Sigma(U_{1})\cap\Sigma(S')=\{0\}.$
\end{prop}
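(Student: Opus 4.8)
The plan is to reduce both directions to the gcd criterion of Lemma \ref{lem:intersect}. Since $U_1$ and $S'$ are both zero-sum, the product $U_1 S'$ is zero-sum, and hence (taking $S = U_1 S'$, which divides itself) Lemma \ref{lem:intersect} says $U_1 S'$ is a UFIS if and only if $\gcd(U,V)$ is zero-sum for every pair of zero-sum subsequences $U,V$ of $U_1 S'$. I would run the entire argument through this single criterion, using throughout that the indexed elements of $U_1$ and of $S'$ occupy disjoint slots inside $U_1 S'$, so that $\gcd$ distributes across the two parts.

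For the forward direction I would argue contrapositively. If some nonzero $g \in \Sigma(U_1)\cap\Sigma(S')$, write $g=\overline{\sigma}(A)$ with $A\mid U_1$ and $g=\overline{\sigma}(B)$ with $B\mid S'$. Then $W=U_1 A^{-1} B$ has $\overline{\sigma}(W)=-g+g=0$, so $W$ and $U_1$ are two zero-sum subsequences of $U_1 S'$. Because the index sets are disjoint, $\gcd(W,U_1)=U_1 A^{-1}$, whose sum is $-g\neq 0$; note $A\neq U_1$ since $\overline{\sigma}(U_1)=0\neq g$, so this is a genuine non-zero-sum gcd. By Lemma \ref{lem:intersect}, $U_1 S'$ does not even divide a UFIS, which establishes the forward implication.

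For the reverse direction, assume $\Sigma(U_1)\cap\Sigma(S')=\{0\}$ and take any two zero-sum subsequences $U,V$ of $U_1 S'$. I would split $U=\gcd(U,U_1)\cdot\gcd(U,S')$ into its $U_1$-part and its $S'$-part, and similarly for $V$. The crucial step is that $\overline{\sigma}(\gcd(U,U_1))=-\overline{\sigma}(\gcd(U,S'))=\overline{\sigma}(S'\gcd(U,S')^{-1})$ lies in $\Sigma(U_1)\cap\Sigma(S')=\{0\}$, forcing each of the two parts of $U$ to be separately zero-sum, and likewise for $V$. The $U_1$-parts are then zero-sum divisors of the irreducible $U_1$, hence each equal to $1$ or $U_1$, so their gcd is zero-sum; the $S'$-parts are zero-sum subsequences of the UFIS $S'$, so their gcd is zero-sum by the forward implication of Lemma \ref{lem:intersect}. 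Since $\gcd(U,V)$ is the product of these two gcds over disjoint index sets, it is zero-sum, and Lemma \ref{lem:intersect} concludes that $U_1 S'$ is a UFIS.

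The sum bookkeeping and the distributivity of $\gcd$ over the disjoint $U_1$- and $S'$-slots are routine. The one step demanding genuine care is the extraction in the reverse direction of the fact that \emph{each} of the two pieces of a zero-sum subsequence is separately zero-sum: this is exactly where the hypothesis $\Sigma(U_1)\cap\Sigma(S')=\{0\}$ combines with the irreducibility of $U_1$, and I expect it to be the crux of the proof.
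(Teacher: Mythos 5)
Your proof is correct. Note that the paper itself offers no proof of this proposition --- it is quoted from Gao \cite{G} --- so there is no internal argument to compare against; but your self-contained derivation from Lemma \ref{lem:intersect} is sound. The forward direction is clean: from a nonzero $g=\overline{\sigma}(A)=\overline{\sigma}(B)$ with $A\mid U_{1}$, $B\mid S'$, the pair $U_{1}$ and $U_{1}A^{-1}B$ are zero-sum subsequences whose greatest common divisor $U_{1}A^{-1}$ has sum $-g\neq0$, which by the forward half of Lemma \ref{lem:intersect} rules out $U_{1}S'$ even dividing a UFIS. The reverse direction correctly isolates the crux: since $S'$ is zero-sum (as the paper's definition of a UFIS requires), $-\overline{\sigma}(\gcd(U,S'))=\overline{\sigma}\bigl(S'\gcd(U,S')^{-1}\bigr)\in\Sigma(S')$, so the common value lands in $\Sigma(U_{1})\cap\Sigma(S')=\{0\}$ and each of the two pieces of $U$ is separately zero-sum; irreducibility of $U_{1}$ and the forward half of Lemma \ref{lem:intersect} applied to $S'$ then handle the two gcds, and the disjointness of the index slots (which is implicitly guaranteed by the standing squarefreeness convention on $U_{1}S'$) lets you multiply them back together. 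The final appeal to Lemma \ref{lem:intersect} to conclude that $U_{1}S'$ \emph{is} a UFIS, rather than merely divides one, is justified because $U_{1}S'$ is itself zero-sum, exactly as in the first case of the paper's proof of that lemma.
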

Thus $U_{i}$ is optimal in $S$ if and only if it is the longest
irreducible indexed sequence over $G$ which has sumset equal to $\{0\}\cup(G\backslash\Sigma(S'))$,
where $S'=SU_{i}{}^{-1}$.

Note that if $|U_{i}|\leq p$ for all $i$, then $|S|\leq np$ by
the concavity of the logarithm and \eqref{eq:6}. Henceforth a zero-sum
indexed sequence is called \emph{short} if it has length at most $p$
and \emph{long} otherwise. A zero-sum free indexed sequence is called
\emph{short} if it has length less than $p$ and \emph{long }otherwise.
The following conjecture would resolve the case of longer $U_{i}.$
\begin{conjecture}
\label{shortconjecture} If $T$ is a zero-sum free indexed sequence
over $G=C_{p}^{k}$, then there exists a subsequence $T_{0}$ of $T$
with length less than $p$ such that no other subsequence of $T$
has the same sum as $T_{0}$.
\end{conjecture}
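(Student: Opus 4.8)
The plan is to reduce the existence of a unique-sum subsequence to a statement about disjointness of subsequence sumsets, to dispose of the high-multiplicity case by taking a block of a single element, and then to attempt an induction on the rank through a quotient, where the essential difficulty lies.

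First I would reformulate the conclusion. For a sequence $R$ let $\Sigma^{*}(R)$ denote the set of sums $\overline{\sigma}(X)$ over \emph{nonempty} subsequences $X\mid R$. I claim that a subsequence $T_{0}\mid T$ has a sum shared by no other subsequence of $T$ if and only if $\Sigma^{*}(T_{0})\cap\Sigma^{*}(TT_{0}^{-1})=\emptyset$. Indeed, if $T_{0}'\neq T_{0}$ has $\overline{\sigma}(T_{0}')=\overline{\sigma}(T_{0})$, then dividing out $D=\gcd(T_{0},T_{0}')$ yields disjoint subsequences $X=T_{0}D^{-1}$ and $Y=T_{0}'D^{-1}$ of equal sum, both nonempty because $T$ is zero-sum free, with $X\mid T_{0}$ and $Y\mid TT_{0}^{-1}$; conversely any such $X,Y$ produce the distinct subsequence $T_{0}X^{-1}Y$ of the same sum as $T_{0}$. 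Thus the conjecture is equivalent to splitting $T=T_{0}T_{1}$ with $|T_{0}|<p$ and $\Sigma^{*}(T_{0})\cap\Sigma^{*}(T_{1})=\emptyset$. Note that if $|T|<p$ we may simply take $T_{0}=T$, so the content is entirely in the range $|T|\geq p$.

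Next I would try the simplest candidate: let $g$ be an element of $T$ of maximal multiplicity $j=v_{g}(T)$, and set $T_{0}=g^{j}$. Zero-sum freeness forces $j\leq p-1$, so $|T_{0}|<p$, and $\Sigma^{*}(T_{0})=\{g,2g,\dots,jg\}\subseteq\langle g\rangle$. The only obstruction is a nonempty $Y\mid Tg^{-j}$ with $\overline{\sigma}(Y)=ig$ for some $1\leq i\leq j$; but whenever $i\geq p-j$ the subsequence $Yg^{p-i}$ is a nontrivial zero-sum in $T$, a contradiction. In particular, if some element attains multiplicity $p-1$ then $g^{p-1}$ already works, and more generally only the ``small'' collisions with $i+j<p$ can survive. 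Hence the hard case is the spread-out regime in which every multiplicity is at most $p-2$ and $T$ is supported on many distinct elements.

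To handle the surviving collisions I would induct on the rank $k$, passing to $\overline{G}=G/\langle g\rangle\cong C_{p}^{k-1}$: the dangerous subsequences $Y$ are precisely those mapping to zero-sum subsequences of the image $\overline{Tg^{-j}}$, and one would hope to choose $g$, amalgamate, and lift a short unique-sum subsequence produced by the inductive hypothesis in $\overline{G}$. The main obstacle, and the reason the statement remains a conjecture, is exactly here: the image $\overline{Tg^{-j}}$ need not be zero-sum free, so the quotient does not inherit the hypothesis, and ruling out the small-multiple collisions $\overline{\sigma}(Y)=ig$ with $i+j<p$ cannot be done from zero-sum freeness alone. Reconciling the $\langle g\rangle$-component with a lifted $\overline{G}$-component while simultaneously keeping the total length below $p$ is the crux on which this approach stalls, and I expect any successful proof to require genuinely new structural input in the spread-out regime rather than the amalgamation and quotient techniques used elsewhere in the paper.
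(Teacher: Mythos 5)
This statement is Conjecture \ref{shortconjecture} of the paper: it is posed as an open problem and the paper gives no proof of it, so there is nothing of the paper's to compare your argument against. Your write-up is consistent with that status --- you do not claim a complete proof, and the point at which you stall is a genuine gap rather than an oversight. For the record, the partial steps you do carry out are correct. The reformulation of ``$T_{0}$ has a sum shared by no other subsequence of $T$'' as $\Sigma^{*}(T_{0})\cap\Sigma^{*}(TT_{0}^{-1})=\emptyset$ is right (modulo the implicit convention that $T_{0}$ is nonempty; otherwise the empty subsequence trivially satisfies the conclusion for any zero-sum free $T$), since taking $D=\gcd(T_{0},T_{0}')$ and using zero-sum freeness to see that $T_{0}D^{-1}$ and $T_{0}'D^{-1}$ are both nonempty works in both directions. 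The observation that $T_{0}=g^{j}$ with $j=\overline{v}_{g}(T)$ can only fail through collisions $\overline{\sigma}(Y)=ig$ with $i+j<p$ is also correct, because $i+j\geq p$ would make $Yg^{p-i}$ a nontrivial zero-sum subsequence of $T$; in particular this settles the case where some group element attains multiplicity $p-1$.

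The gap is exactly where you place it: in the spread-out regime the image of $Tg^{-j}$ in $G/\langle g\rangle$ need not be zero-sum free, so the inductive hypothesis does not apply to the quotient, and neither zero-sum freeness nor the amalgamation and quotient techniques used elsewhere in the paper rule out the small collisions with $i+j<p$. Since the paper itself offers no argument here --- it only records that the conjecture would imply $\mathsf{N}_{1}(C_{p}^{n})=np$ and lists the cases known from \cite{G} --- your conclusion that genuinely new structural input is required matches the actual state of the problem. What you have should be presented not as a proof but as a correct reduction of the conjecture to the case where every element of $T$ has multiplicity at most $p-2$.
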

Note that the statement is trivial for $|T|<p$, since we can just
take $T_{0}=T$, and the sum is unique because $T$ is zero-sum free.
Also, it suffices to show that $T$ has a proper subsequence $T_{0}$
(not necessarily with length less than $p$) satisfying this property,
since we can apply the result inductively. To see why this conjecture
implies Conjecture \ref{conj:elementaryp}, we claim that any maximal
UFIS over $G$ must have only short irreducible subsequences. Otherwise,
remove one element from a long $U_{i}$ and apply Conjecture \ref{shortconjecture}
to the remaining zero-sum free subsequence to find that $U_{i}$ can
be replaced by a pair of short irreducible indexed sequences of longer
total length.

Next, we offer the following strengthening of Conjecture \ref{shortconjecture}.
\begin{conjecture}
\label{Subgroupconjecture} If $T$ is a zero-sum free indexed sequence
over $G=C_{p}^{k}$, then there exists a cyclic subgroup $H$ of $G$
such that $T_{H}\neq1$ and $H\cap\Sigma(T(T_{H})^{-1})=\emptyset$.
\end{conjecture}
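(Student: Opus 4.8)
The plan is to prove Conjecture~\ref{Subgroupconjecture} by induction on the length $|T|$, using the cyclic subgroup $H$ to ``peel off'' a piece of $T$ whose sum escapes every subsum of the remainder. The base case $|T| < p$ is handled by taking $H = \langle \overline{\sigma}(T)\rangle$, since $T$ being zero-sum free forces $\overline{\sigma}(T)\neq 0$, and we would verify $H\cap \Sigma(1) = H\cap\{0\} = \emptyset$ only if $\overline\sigma(T)\neq 0$ --- here one must be careful about the empty-intersection convention versus the $\{0\}$ convention, and I would first settle this bookkeeping, likely by arranging that $T(T_H)^{-1}$ has no subsum landing in the nonzero part of $H$. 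The reason this strengthening is natural is that it forces the distinguishing subsequence $T_0$ of Conjecture~\ref{shortconjecture} to live entirely inside a \emph{single cyclic subgroup} $H$, which is a much more rigid object than an arbitrary short subsequence; the extra structure is exactly what makes an inductive descent plausible.

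\emph{First} I would pick a candidate subgroup. The most promising choice is to take a nonzero element $g$ with $\overline v_g(T) = v \geq 1$ maximal (or more cleverly, to look at the subgroup generated by a well-chosen term) and set $H = \langle g\rangle \cong C_p$. Then $T_H$ collects all terms of $T$ lying in $H$, and the requirement becomes that no subsequence of $T(T_H)^{-1}$ --- the part of $T$ \emph{outside} $H$ --- has its sum inside $H$. This is where the representation-theoretic or combinatorial leverage over $C_p^k$ should enter: since every nonzero element has order $p$, a subsequence of $T(T_H)^{-1}$ summing into $H$ would, together with an appropriate piece of $T_H$, build a zero-sum subsequence of $T$, contradicting zero-sum freeness --- \emph{provided} $T_H$ has full sumset in $H$, i.e. $|T_H| \geq p-1$ distinct contributions spanning all of $C_p$. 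So the argument splits on whether $T_H$ is large enough to cover $H$.

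\emph{The main obstacle} I anticipate is precisely the case where no cyclic subgroup $H$ has $T_H$ covering all of $H$: if every line through the origin meets $T$ in too few terms, then subsums of the complement can sneak back into $H$, and the clean zero-sum contradiction above fails. In that regime I would fall back on an inductive or averaging argument: amalgamate or project along a quotient $G \to G/H$ to reduce the rank, apply the inductive hypothesis to the projected zero-sum free indexed sequence (using that projection preserves zero-sum freeness only in a controlled way, which must be checked), and then lift the resulting cyclic subgroup back to $G$. Controlling this lift --- ensuring the lifted $H$ still satisfies $H\cap\Sigma(T(T_H)^{-1})=\emptyset$ rather than merely the projected statement --- is the genuinely hard step, since projection can collapse distinct subsums together and thereby create spurious intersections with $H$ upstairs.

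\emph{Finally}, assuming the descent goes through, I would assemble the pieces: the chosen $H$ is cyclic by construction, $T_H \neq 1$ because we selected $H$ to contain at least one term of $T$, and the emptiness of $H\cap\Sigma(T(T_H)^{-1})$ follows from the zero-sum-free contradiction in the covering case and from the lifted inductive hypothesis in the non-covering case. I expect the covering case to be routine and the non-covering/lifting case to require either a new combinatorial input about zero-sum free sequences over $C_p^k$ or one of the representation-theoretic tools referenced in the paper (cf. Chapter~5.5 of \cite{BC} and \cite{O,P1,P2}); honest identification of which quotient to project along is the crux, and I would not be surprised if this is exactly the point at which the conjecture resists a fully elementary proof.
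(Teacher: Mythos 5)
This statement is Conjecture~\ref{Subgroupconjecture}, which the paper poses as an \emph{open problem} in its concluding remarks; the paper contains no proof of it, only the observation that it implies Conjecture~\ref{shortconjecture} by taking $T_{0}=T_{H}$. Your proposal does not close that gap: it is a roadmap whose central step you yourself flag as unresolved. Concretely, the only case you actually argue is the ``covering'' case, where some cyclic $H$ satisfies $\Sigma(T_{H})\supseteq H$. That argument is fine (a nonzero subsum $s\in H$ of $T(T_{H})^{-1}$ pairs with a nonempty subsequence of $T_{H}$ summing to $-s$ to produce a zero-sum subsequence of $T$), but since $T_{H}$ is zero-sum free over $H\cong C_{p}$ it has at most $p-1$ terms, and covering all of $H^{\bullet}$ forces exactly $p-1$ terms on that line --- a condition that generic zero-sum free sequences over $C_{p}^{k}$ will fail for every cyclic subgroup. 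So the bulk of the conjecture lives in your ``non-covering'' case, for which you propose projecting along $G\rightarrow G/H$ and lifting, and you correctly identify that the lift can create spurious subsums landing in $H$; you offer no mechanism to control this. That is precisely the missing idea, not a checkable detail.

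Two smaller but genuine errors: first, your base case choice $H=\langle\overline{\sigma}(T)\rangle$ need not satisfy $T_{H}\neq1$ (e.g.\ $T=g_{1}g_{2}$ with $g_{1},g_{2}$ independent gives $\overline{\sigma}(T)=g_{1}+g_{2}$ and no term of $T$ on that line), so even the induction's anchor is not established as stated. Second, under the paper's definition $\Sigma(S)=\{\overline{\sigma}(W):W\mid S\}$ the empty subsequence always contributes $0\in\Sigma$, so the literal condition $H\cap\Sigma(T(T_{H})^{-1})=\emptyset$ must be read as excluding $0$ (or restricting to $H^{\bullet}$); you notice this but should fix the convention before building on it. In short: the statement remains a conjecture, and your proposal, while identifying the right obstruction, does not constitute a proof.
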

To see that Conjecture \ref{Subgroupconjecture} implies Conjecture
\ref{shortconjecture}, take $T_{0}=T_{H}$.

For the study of the group invariants $\mathsf{k}$, $\mathsf{K}$,
and $\mathsf{K}_{1}$ when $\exp(G)$ has many prime factors, our
methods rely on conditions such as wideness and $2$-wideness. We
are hopeful that a fully general solution removing these conditions,
or improvements on the inequalities in the wide and $2$-wide conditions,
can be made.

If we weight indexed sequences more severely in the following manner
we can drop the wideness and $2$-wideness conditions. Define weighting
function $f$ to be the totally multiplicative function on the positive
integers with $f(p_{i})=1/2^{i}$, where $p_{i}$ is the $i$-th prime.
If $S$ is an indexed sequence over a finite abelian group $G$ define
\[
\mathsf{k}(S,f)=\sum_{g\in G}\overline{v}_{g}(S)\cdot f(\mbox{ord}(g)).
\]

Let $\mathsf{k}(G,f),\mathsf{k}^{*}(G,f),\mathsf{K}_{1}(G,f)$ and
$\mathsf{K}_{1}^{*}(G,f)$ be defined in the natural ways. $ $It
is not difficult to show the following using our methods, in place
of Theorems \ref{thm:littlecross} and \ref{thm:additive}. Recall
that $P^{-}(n)$ is the smallest prime dividing $n$.
\begin{prop}
\label{prop:generalweight}If $p<P^{-}(\mbox{exp}(G))$ and $f$ is
defined as above, then $\mathsf{k}(C_{p^{\alpha}}\oplus G,f)=\mathsf{k}(G,f)+\mathsf{k}(C_{p^{\alpha}},f)$
and $\mathsf{K}_{1}(C_{p^{\alpha}}\oplus G,f)=\mathsf{K}_{1}(C_{p^{\alpha}},f)+\mathsf{K}_{1}(G,f)$.
\end{prop}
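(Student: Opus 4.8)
The plan is to repeat the proofs of Theorems \ref{thm:littlecross} and \ref{thm:additive} with $\mathsf{k}(\cdot)$ replaced everywhere by $\mathsf{k}(\cdot,f)$, and to check that the hypotheses $p\prec\exp(G)$ and $p\prec_{2}\exp(G)$ used there are now furnished automatically by $p<P^{-}(\exp(G))$. First I would redefine \emph{dense} relative to $\mathsf{k}(\cdot,f)$ and confirm that the Amalgamation Lemma (Lemma \ref{lem:amalgamation}) is unaffected. Its counting bounds $p_{i}-1$ and $p_{i}$ are purely group-theoretic and do not see the weight; the only role of $f$ is the step asserting that amalgamating at most $p_{i}$ terms of order $\ell$ into a single term of order $\ell'\mid\ell/p_{i}$ does not decrease the cross number. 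Since $f(\ell')\ge f(\ell)/f(p_{i})$, this reduces to $p_{i}\le 1/f(p_{i})$, and if $p_{i}$ is the $j$-th prime this reads $p_{i}\le 2^{j}$, which holds by Bertrand's postulate.

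Next I would revisit Lemma \ref{lem:manyprimestructure}. The replacement of $S'$ by the canonical tower $T$ (resp. $T'$) is formally identical, with each $1/\mathrm{ord}(g)$ becoming $f(\mathrm{ord}(g))$; since $f$ is totally multiplicative the sums factor exactly as before, and the requirement $\mathsf{k}(T,f)\ge\mathsf{k}(S',f)$ becomes a weighted wideness inequality of the shape
\[
\frac{1}{1-f(p)}\ \ge\ \prod_{j}\frac{1-f(q_{j})^{\beta_{j}+1}}{1-f(q_{j})},\qquad \exp(G)=\textstyle\prod_{j}q_{j}^{\beta_{j}},
\]
with a weighted $2$-wideness inequality governing the UFIS case. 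Here the hypothesis does all the work: every $q_{j}\mid\exp(G)$ exceeds $p=p_{m}$ and hence is a \emph{later} prime, so $f(q_{j})\le 2^{-(m+1)},2^{-(m+2)},\dots$ and $\sum_{j}f(q_{j})\le\sum_{i\ge1}f(p)2^{-i}=f(p)$. With $\prod_{j}(1-a_{j})\ge 1-\sum_{j}a_{j}$ this gives $\prod_{j}\frac{1}{1-f(q_{j})}\le\frac{1}{1-\sum_{j}f(q_{j})}\le\frac{1}{1-f(p)}$, settling the weighted wideness inequality and therefore, by the argument of Theorem \ref{thm:littlecross}, the $\mathsf{k}(\cdot,f)$ half of the proposition.

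With both lemmas available, the proofs of Theorems \ref{thm:littlecross} and \ref{thm:additive} transfer: a dense sequence over $C_{p^{\alpha}}\oplus G$ carries its full complement of pure $p$-power terms, these form a subsequence with full sumset in the $p$-component $H$, and the remaining terms project to a zero-sum free (resp.\ unique factorization) sequence over $G$. This yields $\mathsf{k}(S,f)\le\mathsf{k}(H,f)+\mathsf{k}(G,f)$ and its $\mathsf{K}_{1}$ analogue, and additivity of $\mathsf{k}^{*}(\cdot,f)$ and $\mathsf{K}_{1}^{*}(\cdot,f)$ over direct sums closes the argument.

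I expect the one genuinely delicate point to be the weighted $2$-wideness inequality, which is tightest at the smallest prime. Unlike its wide counterpart, the $2$-wide version $1+2f(p)-2f(p)^{2}\ge\prod_{j}\frac{1}{1-f(q_{j})}$ has little room when $p=2$: the left side equals $3/2$, whereas the right side climbs toward $\prod_{i\ge2}(1-2^{-i})^{-1}\approx 1.73$ once $\exp(G)$ has several small odd prime factors. The crux will therefore be to replace the term-by-term Amalgamation estimate for the cross terms by a global one, using that the total weighted cross number of a dense UFIS is itself bounded so that the per-order bounds cannot all be saturated at once, or else to exploit the two extra elements of slack in the UFIS replacement more carefully than the worst-case geometric estimate permits.
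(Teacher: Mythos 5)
Your overall plan---rerunning the proofs of Theorems \ref{thm:littlecross} and \ref{thm:additive} with the weight $f$ in place of $1/\mathrm{ord}(\cdot)$ and checking that $p<P^{-}(\exp(G))$ automatically supplies the weighted wideness conditions---is exactly what the paper intends; the paper offers no written proof of this proposition beyond the remark that ``our methods'' apply. Your handling of the Amalgamation Lemma is correct: the only weight-dependent step is $f(\ell')\geq p_{i}f(\ell)$ for $\ell'\mid\ell/p_{i}$, which reduces to $p_{i}\leq2^{j}$ for the $j$-th prime and holds by Bertrand's postulate. Your verification of the weighted wideness inequality via $\sum_{j}f(q_{j})\leq f(p)$ and the Weierstrass product inequality is also correct as a statement about that inequality.

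The gap is the one you flag yourself, and it is genuine rather than an artifact of pessimistic estimation: for $p=2$ the weighted $2$-wideness inequality $1+2f(2)-2f(2)^{2}=3/2\geq\prod_{j}\bigl(1-f(q_{j})^{\beta_{j}+1}\bigr)/\bigl(1-f(q_{j})\bigr)$ already fails for $\exp(G)=3^{3}\cdot5^{3}$, where the right side is about $1.52$, so the replacement step in the UFIS half of Lemma \ref{lem:manyprimestructure} does not go through as written and the $\mathsf{K}_{1}$ half of the proposition is unproven for $p=2$; neither of your two suggested remedies is carried out. Moreover, the same $p=2$ tightness infects the $\mathsf{k}$ half, which you declare settled. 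The condition the replacement actually requires is $(p-1)A_{f}(B_{f}-1)\leq1$, where $A_{f}=\sum_{t=0}^{\alpha-a}f(p)^{t}$ accounts for the terms of order $p^{b}$ with $b>a$ and $B_{f}=\sum_{m\mid n'}f(m)$; the paper's own reduction in Lemma \ref{lem:manyprimestructure} silently discards the factor $A_{f}\geq1$, which is harmless for the unweighted wideness condition but not here. For $p=2$ one has $A_{f}$ approaching $2$ and $B_{f}-1$ as large as roughly $0.73$, so your inequality $B_{f}\leq(1-f(p))^{-1}=2$ is not sufficient. Both halves therefore need a new idea at $p=2$---for instance performing the replacement over all orders divisible by $p$ simultaneously, or bounding the total weight of cross terms globally rather than order by order---before the proposition can be considered proved.
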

Note that $\mathsf{D}(G)$ and $\mathsf{N}_{1}(G)$ are, respectively,
the cases of $\mathsf{K}(G,f)$ and $\mathsf{K}_{1}(G,f)$ when $f=1$,
and no additive result like Proposition \ref{prop:generalweight}
is true for either of them. A study of other choices of weighting
functions $f$, and the structures of the dense indexed sequences
that result, might shed light on all of the conjectures we are interested
in.

\section*{Acknowledgments}

This research was conducted as part of the University of Minnesota
Duluth REU program, supported by NSF/DMS grant 1062709 and NSA grant
H98230-11-1-0224. I would like to thank Joe Gallian for his encouragement
and general advice and for making this experience possible, and Daniel
Kriz for his extensive expertise and support, and for reading over
my manuscript. Also, my gratitude goes out to program advisors Krishanu
Sankar and Sam Elder for their helpful input on the early versions
of this paper, and to the anonymous referee for meticulous comments
on its later versions.

\end{document}